\documentclass[11pt]{amsart}

\usepackage[margin=1.1in]{geometry}
\usepackage{amsmath,amssymb,comment}
\usepackage[expansion=false]{microtype}
\usepackage{tikz}
\usetikzlibrary{arrows.meta}
\usepackage[colorlinks=true,linkcolor=blue,citecolor=blue,urlcolor=blue]{hyperref}

\newtheorem{theorem}{Theorem}
\newtheorem{proposition}[theorem]{Proposition}
\newtheorem{lemma}[theorem]{Lemma}
\newtheorem{corollary}[theorem]{Corollary}
\theoremstyle{remark}
\newtheorem{remark}[theorem]{Remark}

\newcommand{\R}{\mathbb{R}}
\newcommand{\N}{\mathbb{N}}

\title{The Erd\H{o}s Similarity Conjecture for Two-Fold Sumsets with a Geometric Summand}

\author{N. Mora Cuellar}
\address{Department of Mathematics, University of British Columbia}
\email{natalia.mora@math.ubc.ca}

\author{A. Iosevich}
\address{Department of Mathematics, University of Rochester, Rochester, NY, USA}
\email{iosevich@gmail.com}

\author{N. Kulkarni}
\address{Department of Mathematics, University of Rochester, Rochester, NY, USA}
\email{nkulkar7@math.rochester.edu}

\author{I. Rojas Aravena}
\address{Department of Mathematics, University of British Columbia}
\email{i.andres@math.ubc.ca}

\author{A. Yavicoli}
\address{Department of Mathematics, University of British Columbia}
\email{yavicoli@math.ubc.ca}

\date{}

\begin{document}

\begin{abstract}
We settle a major case in the two-set regime of the Erd\H{o}s similarity
conjecture: the sum of a geometric sequence and an arbitrary infinite set is
never measure universal. Here a set $E\subset\R$ is measure universal if every
measurable set of positive Lebesgue measure contains an affine copy of $E$.
More precisely, if $A\subset\R$ is infinite, $a\neq 0$, and $0<|r|<1$, then
neither
\[
\{ar^n:n\ge 1\}+A
\qquad\text{nor}\qquad
\{ar^n:n\ge 1\}-A
\]
is measure universal. More generally, the same conclusion holds when the
geometric sequence is replaced by any set containing a lacunary sequence
$(b_n)$ with $-\log b_n=O(n)$.
Bourgain proved non-universality for sums of three arbitrary infinite sets,
whereas the two-set regime is one of the principal remaining cases.
Crucially, our conclusion applies to $\{2^{-n}\}+A$ for every infinite
$A$, even though the non-universality of $\{2^{-n}\}$ itself remains open.

The arbitrary-summand theorem is the maximal lacunary-density endpoint of a
general counting-function trade-off. If $S_1,S_2\subset\R$ contain lacunary
subsequences and $I(W),J(W)$ count their terms that are at least $e^{-W}$, then
$S_1+S_2$ and $S_1-S_2$ are not measure universal whenever
\[
\limsup_{W\to\infty}\frac{I(W)J(W)}{W}=\infty.
\]
No scale-separation or relative-decay assumption is required. The proof combines
a finite-grid implementation of Kolountzakis' criterion with a
near-additive-energy estimate controlling the clustering of lacunary cross-sums.
A packing-number variant replaces lacunarity on one factor by a quantitative
metric-mass condition. In particular, for $\alpha_1,\alpha_2>0$, the
stretched-exponential sumset
\[
\{2^{-n^{\alpha_1}}\}+\{2^{-n^{\alpha_2}}\}
\]
is not measure universal whenever
$1/\alpha_1+1/\alpha_2>1$; analogous conclusions hold for difference sets.
\end{abstract}

\maketitle

\section{Introduction}

Let $E\subset\R$. We say that $E$ is \emph{measure universal} if for every
measurable $L\subset\R$ with $m(L)>0$ there exist $x\in\R$ and $\lambda\neq 0$
with
\[
x+\lambda E\subset L .
\]
Determining which infinite sets are measure universal is the Erd\H{o}s similarity
problem. By the Lebesgue density theorem every finite set is measure universal,
and Erd\H{o}s conjectured that these are the only ones: no infinite set is measure
universal. The conjecture remains open. Since no unbounded set can be measure
universal, it is enough to consider bounded infinite sets, each of which has an
accumulation point. After an affine change of
variables and passage to an infinite subsequence, every such set contains a
strictly decreasing sequence $a_n\to 0$. By monotonicity of measure universality
under inclusion and affine invariance, the conjecture therefore reduces to showing
that every strictly decreasing null sequence of positive numbers is not measure
universal.

In this reduced setting the decisive parameter is the rate of decay. Falconer
\cite{Falconer} and, independently, Eigen \cite{Eigen} proved that sublacunary
sequences --- those with $a_{n+1}/a_n\to 1$ --- are not measure universal.
Kolountzakis \cite{Kol97} extended this to any sequence containing a subset with
suitably large gaps (Theorem~\ref{thm:kol} below). The most difficult unresolved
regime includes genuinely lacunary decay. The prototype is a single geometric
sequence $(\lambda^k)$ with $\lambda\in(0,1)$; even $\{2^{-n}\}$ is not known to
be non-universal.

A second source of examples comes from additive structure. Bourgain \cite{Bourgain}
proved that for any infinite $A_1,A_2,A_3\subset\R$ the sumset $A_1+A_2+A_3$ is
not measure universal. This draws a sharp line between sums of three sets and sums
of two: the two-set regime is one of the main open cases. For two sets,
Kolountzakis' finite-gap criterion provides the principal general source of
examples relevant to the present paper. In particular, as recorded in
the survey \cite{JLM}, the criterion \cite{Kol97} gives that
\begin{equation}\label{eq:stretched}
\{2^{-n^{\alpha}}\}+\{2^{-n^{\alpha}}\}\ \text{is not measure universal for every }
\alpha\in(0,2);
\end{equation}
the case $\alpha=1$ is the geometric double sumset
$\{2^{-n}\}+\{2^{-n}\}$. To see other results related to this problem, see the
surveys \cite{Svetic} and \cite{JLM}.

\subsection{Results}

Our principal result settles a major case in the two-set regime of the
Erd\H{o}s similarity conjecture: the sum or difference of a geometric sequence
and an arbitrary infinite set is not measure universal. More generally, the
same conclusion holds when the geometric sequence is replaced by a set
containing a lacunary sequence $(b_i)$ with $-\log b_i=O(i)$. We obtain this
as the endpoint of a general counting-function theorem for two lacunary
summands, proved through a finite-grid implementation of Kolountzakis'
criterion and a near-additive-energy estimate.

Our organizing tool is a finite-block mechanism for proving non-universality. The
central task becomes the construction of large finite blocks at bounded scale with
controlled separation. Proposition~\ref{prop:block} shows that any set containing
arbitrarily large such blocks is not measure universal. The gap tolerance
$e^{-o(n)}$ is exactly the strength already present in \cite{Kol97}; we isolate it
in a form suited to the finite grids constructed
below.

We call a sequence $(b_i)_{i\ge 1}\subset(0,\infty)$ \emph{lacunary with ratio
$q\in(0,1)$} if $b_{i+1}\le q\,b_i$ for all $i$; such a sequence is strictly
decreasing and tends to $0$. Its \emph{counting function} is
\[
I(W):=\#\{\,i:\ b_i\ge e^{-W}\,\},\qquad W>0 .
\]
All our hypotheses are phrased through counting functions; pointwise decay
conditions such as $-\log b_i=o(i^2)$ enter only as convenient sufficient
conditions for lower bounds on them. Our first main result is the following.

\begin{theorem}\label{thm:product}
Let $S_1,S_2\subset\R$ contain lacunary subsequences $(b_i)_{i\ge1}$ and
$(d_j)_{j\ge1}$ in $(0,\infty)$, with counting functions $I$ and $J$. If
\begin{equation}
\limsup_{W\to\infty}\frac{I(W)\,J(W)}{W}=\infty,
\tag{$\star$}\label{eq:star}
\end{equation}
then $S_1+S_2$ is not measure universal. The same conclusion holds for $S_1-S_2$.
\end{theorem}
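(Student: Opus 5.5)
The plan is to reduce Theorem~\ref{thm:product} to the finite-block criterion, Proposition~\ref{prop:block}. That proposition says a set is not measure universal once it contains, for arbitrarily large $n$, a block of $n$ points inside a bounded interval whose pairwise gaps are at least $e^{-o(n)}$. Since measure universality is monotone under inclusion, we may replace $S_1,S_2$ by the lacunary sequences $(b_i)$ and $(d_j)$. Along a sequence $W_k\to\infty$ realizing \eqref{eq:star}, we take the finite sets
\[
B_k=\{b_i:\ b_i\ge e^{-W_k}\},\qquad D_k=\{d_j:\ d_j\ge e^{-W_k}\},
\]
of sizes $I=I(W_k)$ and $J=J(W_k)$. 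We then look for a large, well-separated subset of $B_k\pm D_k\subset[-2b_1-2d_1,\,2b_1+2d_1]$. The target is a block of cardinality $N\gtrsim IJ$ with separation $e^{-o(N)}$. Separation of order $e^{-CW}$ suffices precisely when $W=o(N)$, that is when $IJ/W\to\infty$, which is exactly \eqref{eq:star}.

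\textbf{Step 1 (grid).} We discretize at scale $\delta=e^{-CW}$ for a large constant $C$, and work with the multiset of cross-sums $b_i+d_j$ (or $b_i-d_j$). Each cross-sum is placed in a cell of length $\delta$. Choosing one representative from each occupied cell and then keeping every other occupied cell gives a $\delta$-separated block. Its size is at least half the number $M$ of occupied cells.

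\textbf{Step 2 (near-energy).} By Cauchy--Schwarz,
\[
M\ \ge\ \frac{(IJ)^2}{Q},
\]
where $Q$ is the number of quadruples $(i,j,i',j')$ with $|(b_i\pm d_j)-(b_{i'}\pm d_{j'})|\le\delta$. The core estimate is therefore
\[
Q\ \lesssim\ IJ\cdot O(1),
\]
or at worst $Q\lesssim IJ\,e^{o(IJ)}$, which still suffices since only the logarithm of the separation matters. To bound $Q$, we rewrite the closeness condition as $|(b_i-b_{i'})\pm(d_j-d_{j'})|\le\delta$. Lacunarity with ratio $q$ gives $|b_i-b_{i'}|\asymp b_{\min(i,i')}$ whenever $i\ne i'$, and the same for $d$. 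So a near-coincidence with $i\ne i'$ forces $b_{\min(i,i')}\asymp d_{\min(j,j')}$ up to $O(\delta)$. For fixed $(i,i')$ there are then only $O(1)$ admissible values of $\min(j,j')$, because consecutive $d$'s differ by a factor $q$. Given that minimum, the other index is determined to within $O(1)$ choices by a second application of lacunarity to the residual $b_{\max}-d_{\max}$. This step is where cases with different ratios $q_1,q_2$ and residuals below $\delta$ must be handled, and I expect this counting to be the main obstacle: it must hold with no scale-separation assumption, so coincidences like $b_i\approx d_j$ for many pairs have to be shown not to cluster beyond a bounded factor. If uniform $O(1)$ fails, I would instead aim for $Q\le IJ\cdot(\log(1/\delta))^{O(1)}=IJ\,W^{O(1)}$. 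That weaker bound gives $M\gtrsim IJ/W^{O(1)}$, and I would try to absorb the polylogarithmic loss by passing to a sub-block, or by working directly with a Kolountzakis-type counting criterion.

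\textbf{Step 3.} With $N\asymp M$ and separation $e^{-CW}=e^{-o(N)}$, Proposition~\ref{prop:block} applies along $W_k$, which gives the theorem. The difference set $S_1-S_2$ is handled identically with the sign changed, since lacunarity was used only through $|d_j-d_{j'}|$.
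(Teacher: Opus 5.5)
Your route is the paper's: put the truncated cross-sums into cells, apply Cauchy--Schwarz against a near-additive energy, keep every other occupied cell, and feed the resulting blocks to Proposition~\ref{prop:block}. The main line is sound.

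Your finer cell size $\delta=e^{-CW}$ with $C>1$ is actually a small simplification over the paper, which discretizes at $e^{-W}$ itself. Once $W$ is large, distinct truncated terms satisfy $|b_i-b_{i'}|\ge(1-q)e^{-W}\gg\delta$, and likewise for the $d_j$. This has three consequences:
\begin{itemize}
\item quadruples with exactly one of $i=i'$, $j=j'$ are excluded;
\item same-sign quadruples are excluded, for any $q<1$;
\item no separate treatment of indices with $b_i$ comparable to the cell size is needed.
\end{itemize}
So the obstacles you anticipate (different ratios, residuals below $\delta$) never arise. You also do not need the thinning to ratio $<\tfrac12$ of Lemma~\ref{lem:thinning}, which the paper uses precisely to exclude same-sign quadruples at scale $e^{-W}$. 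The cost, $\log(1/\delta)=CW=o(IJ)$ along the scales $W_k$, is harmless.

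Two points need fixing.

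\textbf{The count gives $O(I^2)$, not $O(IJ)$.} Your count fixes the ordered pair $(i,i')$ and then finds $O(1)$ choices of $(j,j')$. That bounds the off-diagonal quadruples by $O(I^2)$. This matters: from $Q\lesssim IJ+I^2$ you only get $M\gtrsim J\min(I,J)$. That bound fails at the endpoint $I(W)\asymp W$, $J(W)\to\infty$ slowly, which is exactly the case behind Theorem~\ref{thm:arbitrary}. You can repair it in either of two ways:
\begin{itemize}
\item also run the count with $(j,j')$ fixed, and use $\min(I,J)^2\le IJ$; or
\item count as in Lemma~\ref{lem:energy}: fix $i$; then $d_{\min(j,j')}$ lies in a bounded multiplicative window around $b_i$ ($O(1)$ choices); the other $d$-index is free ($\le J$ choices); and $b_{i'}$ lies in an interval of length $2\delta$ ($O(1)$ choices).
\end{itemize}

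\textbf{The fallbacks do not work and should be dropped.} A loss in $Q$ shrinks the block size $M\ge (IJ)^2/Q$, not the separation. So $Q\lesssim IJ\,e^{o(IJ)}$ yields nothing. A bound $Q\lesssim IJ\,W^{O(1)}$ would need $IJ/W^{1+O(1)}\to\infty$, which is strictly stronger than \eqref{eq:star}. No passage to sub-blocks recovers lost cardinality. A uniform bound $Q=O(IJ)$ is exactly what \eqref{eq:star} requires, and with the repair above your argument provides it.
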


Two features deserve emphasis. First, the two lacunary subsequences may live on
completely unrelated scales: no scale-separation or relative-decay hypothesis
relates them, and their counting functions may trade off against each other.
Second, \eqref{eq:star} is a $\limsup$ condition: all decay hypotheses in this
paper need only hold along a sequence of scales, because the underlying criterion
of Kolountzakis requires blocks only for arbitrarily large $n$
(Theorem~\ref{thm:kol}). The main new ingredient is a near-additive-energy
estimate (Lemma~\ref{lem:energy}) showing that the cross-sums of two lacunary
sequences have uniformly bounded clustering at every scale; as a consequence, a
positive proportion of the cross-sums can be selected so as to be mutually
separated. This avoids the need for any scale-separation hypothesis between
the two sequences.

The counting function of \emph{any} lacunary sequence satisfies $I(W)=O(W)$
(Lemma~\ref{lem:counting}(i)), so a single lacunary summand can never contribute
more than $\asymp W$ points above scale $e^{-W}$, and equality of growth,
$I(W)\asymp W$, is equivalent to the pointwise condition $-\log b_i=O(i)$
(Remark~\ref{rem:dyadic}). At this endpoint, condition \eqref{eq:star} degenerates
to $J(W)\to\infty$, which holds for every infinite null sequence whatsoever. This
observation yields our second main result, in which the second summand is a
completely arbitrary infinite set.

\begin{theorem}\label{thm:arbitrary}
Let $S\subset\R$ contain a lacunary subsequence $(b_i)_{i\ge1}\subset(0,\infty)$
with $-\log b_i=O(i)$. Then $S+A$ and $S-A$ are not measure universal for every
infinite set $A\subset\R$. In particular, $\{2^{-n}\}+A$ is not measure universal
for any infinite $A\subset\R$.
\end{theorem}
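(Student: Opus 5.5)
Theorem~\ref{thm:arbitrary} will follow from Theorem~\ref{thm:product} once the arbitrary set $A$ is reduced to a lacunary sequence. Take $S_1=S$ with its lacunary subsequence $(b_i)$, and put $S_2=A$ after normalization. The reductions are these.

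First, if $A$ is unbounded then $S\pm A$ is unbounded, so it is not measure universal. We may therefore assume $A$ is bounded. Being infinite, it then has an accumulation point $a_0$.

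Measure universality is invariant under translation. Hence $S+A$ is universal iff $S+(A-a_0)$ is, and likewise $S-A$ iff $S-(A-a_0)$. So we may assume $0$ is an accumulation point of $A$.

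Replacing $A$ by $-A$ exchanges $S+A$ and $S-A$. Since some one-sided neighborhood of $0$ contains infinitely many points of $A$, we may assume $A$ contains a strictly decreasing positive null sequence.

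Any strictly decreasing positive null sequence contains a lacunary subsequence of ratio $1/2$, obtained greedily: pick $d_{j+1}\le d_j/2$ among the remaining terms. Call the result $(d_j)$, with counting function $J$. Because $d_j\to0$ with all $d_j>0$, every term is eventually above $e^{-W}$, so $J(W)\to\infty$ as $W\to\infty$.

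On the $S$ side, the hypothesis $-\log b_i\le Ci$ for all $i$ (with some $C\ge 1$) gives $b_i\ge e^{-W}$ whenever $i\le W/C$. Therefore $I(W)\ge \lfloor W/C\rfloor\ge W/(2C)$ for large $W$. Consequently
\[
\frac{I(W)J(W)}{W}\ \ge\ \frac{J(W)}{2C}\ \longrightarrow\ \infty ,
\]
so condition~\eqref{eq:star} holds even as a genuine limit.

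Theorem~\ref{thm:product} then shows that both $S+A'$ and $S-A'$ are not measure universal, where $A'$ denotes the normalized set. Universality is monotone: any superset of a universal set's copies argument runs the other way, so if a superset were universal its subset would be too. Since $S\pm\{d_j\}\subset S\pm A'$, it is enough that the subsets fail to be universal, and they do.

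Undoing the normalizations gives the statement for both $S+A$ and $S-A$ for the original $A$. The reflection $A\mapsto -A$ is harmless here because Theorem~\ref{thm:product} covers both the sum and the difference.

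For the special case, $b_n=2^{-n}$ is lacunary with ratio $1/2$ and $-\log b_n=n\log 2=O(n)$.

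There is no real obstacle, since all the analytic content sits in Theorem~\ref{thm:product}. The one point needing care is the bookkeeping of the sign and translation reductions. These must preserve the pair "$S+A$, $S-A$" as a set, which is exactly why the difference case of Theorem~\ref{thm:product} is required.
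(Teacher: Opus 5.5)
Your proposal is correct and follows the paper's first proof of Theorem~\ref{thm:arbitrary}. As there, you reduce to bounded $A$, extract a one-sided lacunary null sequence at an accumulation point, and note that $I(W)\gtrsim W$ together with $J(W)\to\infty$ gives \eqref{eq:star}. The differences are only bookkeeping: you invoke Theorem~\ref{thm:product} directly, letting it do the thinning to ratio $<\tfrac12$, and handle signs by reflecting $A$, whereas the paper thins by hand, applies Proposition~\ref{prop:grid} to $G_\sigma$, and reduces $S-A$ to $S+(-A)$ at the outset.
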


The significance is that this conclusion does not follow by monotonicity from
non-universality of the structured summand: it applies to
$S=\{2^{-n}\}$, whose own non-universality remains open. It therefore
occupies an intermediate position between the standard two-sequence results
discussed above, which impose conditions on both summands, and Bourgain's
three-set theorem, which imposes none. It arises at the linear-counting
endpoint of Theorem~\ref{thm:product}: once $I(W)\gtrsim W$, the condition
on the second counting function reduces to $J(W)\to\infty$.

In fact, lacunarity itself is needed on only one of the two factors; the other
needs only \emph{metric mass}, measured by packing numbers. For a set
$S\subset\R$ and a bounded interval $Q_0$ with $S\cap Q_0$ infinite, let
\[
N(W):=\max\big\{\#U:\ U\subseteq S\cap Q_0,\
|u-u'|\ge 2e^{-W}\ \text{for all distinct } u,u'\in U\big\}
\]
denote the packing function of $S$ in $Q_0$; it is finite for each $W$,
nondecreasing, and tends to infinity.

\begin{theorem}\label{thm:packing}
Let $S_2\subset\R$ contain a lacunary sequence $(d_j)_{j\ge1}\subset(0,\infty)$,
of any ratio $q\in(0,1)$, with counting function $J$. Let $S_1\subset\R$ be
infinite, and suppose some bounded interval $Q_0$ contains infinitely many points
of $S_1$ (otherwise $S_1$ is unbounded and $S_1\pm S_2$ is trivially not
universal); let $N$ be the packing function of $S_1$ in $Q_0$. If
\begin{equation}\label{eq:packstar}
\limsup_{W\to\infty}\frac{\min\big(N(W),\,J(W)\big)\,J(W)}{W}=\infty,
\end{equation}
then $S_1+S_2$ and $S_1-S_2$ are not measure universal.
\end{theorem}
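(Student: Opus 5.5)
We will follow the same route as for Theorem~\ref{thm:product}. We build large finite blocks inside $S_1\pm S_2$, lying in a bounded interval and with controlled separation, and then invoke Proposition~\ref{prop:block}, which is the finite-grid form of Kolountzakis' criterion. The only change is that the lacunary structure of the first factor is replaced by a maximal $2e^{-W}$-packing of $S_1\cap Q_0$.

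Fix a scale $W$ along which the quantity in \eqref{eq:packstar} is large. Set $M:=\min(N(W),J(W))$. Choose a $2e^{-W}$-separated set $U\subseteq S_1\cap Q_0$ with $\#U=M$. Let $D$ consist of the $J(W)$ terms $d_j\ge e^{-W}$, all of which lie in $(0,d_1]$. Every cross-sum $u\pm d$ then lies in the bounded interval $Q_0\pm[0,d_1]$, which is independent of $W$. So the block $U\pm D$ has bounded scale, and it contains $M\cdot J(W)$ formal sums.

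The next step is to extract from $U\pm D$ a large subfamily whose points are separated at scale $\asymp e^{-W}$. We use a clustering bound in the spirit of Lemma~\ref{lem:energy}. Fix a point $x$ and count the pairs $(u,d)\in U\times D$ with $|u\pm d-x|\le e^{-W}$.

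Take two such pairs $(u,d)$ and $(u',d')$ with $d\ne d'$. Then $|u-u'|$ lies within $2e^{-W}$ of $|d-d'|$. Lacunarity gives $|d-d'|\asymp\max(d,d')$ with constants depending only on $q$. Since $U$ is $2e^{-W}$-separated, each $u$ can be matched with $O(1)$ values of $d$ in a given window. The hard part is to bound how many $u$ can occur.

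The natural estimate counts, for each $d\in D$, the $u\in U$ in a window of length $2e^{-W}$ around $x\mp d$. There are $O(1)$ such $u$ by separation, so the total cluster size is $O(J(W))$. This is too weak.

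We therefore need a better count, and we will use the minimum $M$ to get it. Order the pairs in the cluster by $d$. Consecutive distinct $d$'s force the differences $u-u'$ to be comparable to distinct lacunary scales. This makes the relevant $u$'s a "lacunary-shaped" configuration around $x$.

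Our plan is to bound the near-additive energy
\[
E:=\#\{(u,d,u',d'):\ |(u\pm d)-(u'\pm d')|\le 2e^{-W}\}.
\]
We aim to show $E\lesssim M\,J(W)\,\log(\cdot)$, or more optimistically $E\lesssim M\,J(W)$. Each fixed $(d,d')$ contributes at most $O(M)$ quadruples, by the separation of $U$. Summing over all pairs $(d,d')$ gives only $M\,J(W)^2$, which is too large.

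The improvement should come from truncating $D$ to its top $M$ terms, namely those with $d\ge e^{-W'}$ for a suitable $W'\le W$. For $d\neq d'$ with $|d-d'|\ge c\max(d,d')$, the condition $|u-u'|\approx|d-d'|$ fixes $u'$ up to $O(1)$ choices for each $u$. The obstacle is that different $(d,d')$ may reuse the same $(u,u')$. Lacunarity shows that a given difference $u-u'$ approximates $|d-d'|$ for only $O(1)$ pairs $(d,d')$, because $\max(d,d')$ determines the pair up to bounded ambiguity once the value $|d-d'|$ is fixed to precision $e^{-W}\ll d$. Hence $E\lesssim M^2+MJ$, which gives $E\lesssim M\,J(W)$ when $M\le J$.

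By Cauchy--Schwarz, a greedy or maximal selection then produces a $\gtrsim e^{-W}$-separated subset of $U\pm D$ of size $\gtrsim (MJ)^2/E\gtrsim M\,J(W)$.

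Finally, we apply Proposition~\ref{prop:block} with $n\asymp M\,J(W)$ and gap $e^{-W}$. Condition \eqref{eq:packstar} says exactly that $W=o(n)$ along the chosen scales, so the gap is $e^{-o(n)}$. The proposition then yields non-universality of both $S_1+S_2$ and $S_1-S_2$. The difference case is identical after replacing $d$ by $-d$.

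The main obstacle is the energy bound. The argument must show that a separated but otherwise arbitrary set $U$ cannot create many coincidences with lacunary differences. We will first try the "fixed difference determines $(d,d')$" argument above, and fall back on bounding $M$ by $J$ if the unbounded ambiguity of small $d$ causes trouble.
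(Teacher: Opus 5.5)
Your proposal is correct and follows essentially the same route as the paper: a $2e^{-W}$-separated $U\subseteq S_1\cap Q_0$ of size $\min(N(W),J(W))$, then a near-energy bound of order $MJ+M^2$ obtained by fixing $(u,u')$ and showing that only $O_q(1)$ pairs $(d,d')$ fit, then Cauchy--Schwarz on $e^{-W}$-cells, and finally Proposition~\ref{prop:block}. The small-$d$ worry needs no fallback, and truncating $D$ is unnecessary: with energy threshold $e^{-W}$ the separation gives $|d-d'|\in[\tfrac12|u-u'|,\tfrac32|u-u'|]$, and at your threshold $2e^{-W}$ the floor $d\ge e^{-W}$ already confines $\max(d,d')$ to a bounded multiplicative window.
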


The packing form contains Theorem~\ref{thm:arbitrary}. Indeed, when
$J(W)\gtrsim W$, condition \eqref{eq:packstar} reduces to
$\min(N(W),J(W))\to\infty$, which is automatic, so the packing form yields the
arbitrary-summand theorem uniformly. It complements Theorem~\ref{thm:product}
rather than subsuming it: the first factor is freed from all structural
hypotheses --- it may consist of clusters with no usable lacunary subsequence, so
that condition \eqref{eq:star} is unavailable --- but its contribution to the
count is capped at $J(W)$ (see Remark~\ref{rem:capNJ} for an explanation of
this cap).
The same near-energy estimate that drives Theorem~\ref{thm:product} thus reaches
clustered first factors directly, through the packing function alone and with no
relative-density input; Section~\ref{sec:packing} illustrates this.

Theorems~\ref{thm:product}--\ref{thm:packing} recover the standard
two-sequence examples recorded in the literature and yield several asymmetric
families. We record the main consequences; throughout, ``not universal''
abbreviates ``not measure universal''.

\begin{corollary}\label{cor:tradeoff}
Let $S_1,S_2\subset\R$ contain lacunary subsequences $(b_i)$, $(d_j)$ in
$(0,\infty)$ such that either
\begin{enumerate}
\item[(i)] $-\log b_i=o(i^{\alpha})$ and $-\log d_j=o(j^{\beta})$ for some
$\alpha,\beta>1$ with $\dfrac1\alpha+\dfrac1\beta\ge 1$; or
\item[(ii)] $-\log b_i=O(i^{\alpha})$ and $-\log d_j=O(j^{\beta})$ for some
$\alpha,\beta\ge 1$ with $\dfrac1\alpha+\dfrac1\beta>1$.
\end{enumerate}
Then $S_1+S_2$ and $S_1-S_2$ are not universal.
\end{corollary}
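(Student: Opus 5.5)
The plan is to deduce Corollary~\ref{cor:tradeoff} directly from Theorem~\ref{thm:product}. That is, we show that under (i) or (ii) the counting functions satisfy \eqref{eq:star}. The only input is to convert a pointwise decay bound $-\log b_i\le f(i)$ into a lower bound on the counting function. If $f$ is nondecreasing and $f(i)\le W$, then $b_1\ge\cdots\ge b_i\ge e^{-W}$ by monotonicity of the lacunary sequence, so $I(W)\ge i$. Hence
\[
I(W)\ \ge\ \max\{i:\ f(i)\le W\}\ \approx\ f^{-1}(W).
\]
This is the kind of statement I expect Lemma~\ref{lem:counting} to record; if not, it is an immediate check.

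For case (ii), fix $C$ with $-\log b_i\le C i^{\alpha}$ and $-\log d_j\le C j^{\beta}$ for all $i,j$. Finitely many initial terms can be absorbed by enlarging $C$, since all terms are positive. The bound above gives
\[
I(W)\ge \lfloor (W/C)^{1/\alpha}\rfloor \quad\text{and}\quad J(W)\ge \lfloor (W/C)^{1/\beta}\rfloor .
\]
Therefore $I(W)J(W)/W\gtrsim W^{1/\alpha+1/\beta-1}\to\infty$, because $1/\alpha+1/\beta>1$. Theorem~\ref{thm:product} then applies.

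For case (i), the exponent sum may equal $1$, so we need the little-$o$. Given $\varepsilon>0$, there is $i_0$ with $-\log b_i\le \varepsilon i^{\alpha}$ for $i\ge i_0$. For $W$ large, take $i=\lfloor (W/\varepsilon)^{1/\alpha}\rfloor\ge i_0$. Then $-\log b_k\le W$ for all $k\le i$: for $i_0\le k\le i$ this follows from the $o$-bound, and for $k<i_0$ once $W\ge\max_{k<i_0}(-\log b_k)$. Hence $I(W)\ge \tfrac12(W/\varepsilon)^{1/\alpha}$ for large $W$, and similarly $J(W)\ge\tfrac12(W/\varepsilon)^{1/\beta}$. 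It follows that
\[
\frac{I(W)J(W)}{W}\ \ge\ \tfrac14\,\varepsilon^{-1/\alpha-1/\beta}\,W^{1/\alpha+1/\beta-1}\ \ge\ \tfrac14\,\varepsilon^{-1/\alpha-1/\beta}.
\]
The last inequality uses $1/\alpha+1/\beta\ge1$ and $W\ge1$. The liminf is therefore at least a quantity that blows up as $\varepsilon\to0$, so the limit is $+\infty$. In particular \eqref{eq:star} holds, and Theorem~\ref{thm:product} gives both the sum and difference conclusions.

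There is no real obstacle here. The only point requiring care is the endpoint $1/\alpha+1/\beta=1$ in (i), where the $o$-hypothesis must be used uniformly via the $\varepsilon$ argument rather than through a fixed constant. The restriction $\alpha,\beta>1$ in (i) plays no role beyond consistency with Lemma~\ref{lem:counting}(i), which says lacunary sequences cannot decay slower than exponentially.
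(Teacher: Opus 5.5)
Your proposal is correct and is essentially the paper's own proof. The paper also verifies \eqref{eq:star} by lower-bounding $I$ and $J$ through Lemma~\ref{lem:counting}(ii),(iii), whose proofs are exactly your inline monotonicity and $\varepsilon$ arguments, and then invokes Theorem~\ref{thm:product}; your remark that $\alpha,\beta>1$ is inessential is also right, since for $\alpha\le 1$ the hypothesis $-\log b_i=o(i^{\alpha})$ cannot hold for a lacunary sequence.
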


The case $\alpha=\beta=2$ of Corollary~\ref{cor:tradeoff}(i) is the symmetric
quadratic-decay theorem, which was the main result of an earlier version of this
paper; we record it separately together with its specialization to double sumsets.

\begin{corollary}\label{cor:symmetric}
Let $S_1,S_2\subset\R$ contain lacunary subsequences $(b_i)$, $(d_j)$ with
$-\log b_i=o(i^2)$ and $-\log d_j=o(j^2)$. Then $S_1+S_2$ and $S_1-S_2$ are not
universal. In particular, if $S$ contains a lacunary subsequence $(b_k)$ with
$-\log b_k=o(k^2)$, then $S+S$ and $S-S$ are not universal; every geometric double
sumset is not universal, and \eqref{eq:stretched} holds.
\end{corollary}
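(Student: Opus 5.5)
This follows directly from Corollary~\ref{cor:tradeoff}(i) with $\alpha=\beta=2$, since $\frac12+\frac12=1$ satisfies the non-strict inequality. For the specializations, I take $S_1=S_2=S$ with $(b_i)=(d_j)=(b_k)$, which gives non-universality of $S+S$ and $S-S$.

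If one prefers to argue directly from Theorem~\ref{thm:product} rather than through the corollary, the key step is to turn $-\log b_i=o(i^2)$ into a lower bound on the counting function. Fix $\varepsilon>0$. For all large $i$ we have $-\log b_i\le \varepsilon i^2$. For a given large $W$, put $i_0=\lfloor\sqrt{W/\varepsilon}\rfloor$. Then every large $i\le i_0$ has $b_i\ge e^{-\varepsilon i^2}\ge e^{-W}$. Since the sequence is decreasing, the finitely many early indices are also counted once $W$ is large. Hence $I(W)\ge \sqrt{W/\varepsilon}-O_\varepsilon(1)$, and likewise $J(W)\ge\sqrt{W/\varepsilon}-O_\varepsilon(1)$. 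This gives
\[
\liminf_{W\to\infty} \frac{I(W)J(W)}{W}\ge \frac1\varepsilon ,
\]
and letting $\varepsilon\to0$ yields \eqref{eq:star}. Theorem~\ref{thm:product} then applies to both $S_1+S_2$ and $S_1-S_2$.

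For the geometric double sumset, let $S=\{ar^n\}$ with $0<|r|<1$. If $r<0$ or $a<0$, I pass to the subsequence $n$ even (or odd) on which $ar^n$ has constant sign. If needed I also reflect by $x\mapsto -x$; this is an affine map, and it sends $S+S$ to $(-S)+(-S)$ while preserving measure universality. The resulting sequence is positive and lacunary with ratio $r^2$, and it has $-\log b_k=O(k)=o(k^2)$. For \eqref{eq:stretched}, the sequence $b_n=2^{-n^\alpha}$ with $\alpha\in(0,2)$ has $-\log b_n=n^\alpha\log 2=o(n^2)$. The only obstacle is that for $\alpha<1$ it is not lacunary. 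In that case I extract a lacunary subsequence $n_k=\lceil (Ck)^{1/\alpha}\rceil$, so that $n_k^\alpha\approx Ck$ with increments at least $1$ (in exponent base $2$). This subsequence satisfies $-\log b_{n_k}=O(k)=o(k^2)$. For $1\le\alpha<2$ the sequence itself is already lacunary. None of these steps is a real difficulty; the only care needed is in the sign and subsequence reductions.
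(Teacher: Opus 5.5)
Your proposal is correct and follows the paper's proof. The main statement is Corollary~\ref{cor:tradeoff}(i) with $\alpha=\beta=2$; your optional direct derivation is just Lemma~\ref{lem:counting}(iii) fed into Theorem~\ref{thm:product}. You then use the same sign/parity reduction for geometric sequences and the fact that $2^{-n^\alpha}$ is already lacunary for $1\le\alpha<2$. The only cosmetic difference is that for $0<\alpha<1$ you extract the subsequence explicitly via $n_k=\lceil (Ck)^{1/\alpha}\rceil$, where the paper invokes the greedy extraction of Lemma~\ref{lem:extraction}. Your extraction works once $C\ge 2$, since then $Ck\le n_k^\alpha\le Ck+1$.
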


\begin{corollary}\label{cor:concrete}
Let $A\subset\R$ be an arbitrary infinite set.
\begin{enumerate}
\item[(a)] For any $a\neq 0$ and $0<|r|<1$, the sets $\{ar^{n}\}+A$ and
$\{ar^{n}\}-A$ are not universal.
\item[(b)] If $(a_n)$ is a decreasing null sequence with
$\liminf_n a_{n+1}/a_n>0$ (which in particular implies
$a_n\ge e^{-Cn}$ for some $C>0$ and all large $n$), then
$\{a_n\}+A$ and $\{a_n\}-A$ are not universal. In particular,
$\{a_n\}+\{a_n\}$ and $\{a_n\}-\{a_n\}$ are not universal.
\end{enumerate}
\end{corollary}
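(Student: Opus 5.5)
Both parts reduce to Theorem~\ref{thm:arbitrary}, once we exhibit a suitable lacunary sequence inside the structured summand. Affine invariance handles the normalisation: if $E$ is not universal, then neither is $x+\lambda E$ for $\lambda\neq0$. We also use that $S\pm A$ with $S$ replaced by $-S$ is again of the same form, since $-S+A=-(S-A)$ and $-S-A=-(S+A)$. This lets us move signs around freely.

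For (a), write $a r^n$ and treat two cases. If $r>0$, then $\{ar^n\}=a\{r^n\}$ and
\[
\{ar^n\}\pm A=a\bigl(\{r^n\}\pm a^{-1}A\bigr).
\]
Since $a^{-1}A$ is an arbitrary infinite set, it suffices to treat $a=1$. The sequence $b_i=r^i$ is lacunary with ratio $q=r$, and $-\log b_i=i\log(1/r)=O(i)$, so Theorem~\ref{thm:arbitrary} applies.

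If $r<0$, we pass to a subset, using monotonicity under inclusion. The set $\{ar^n\}$ contains the even-index terms $a r^{2m}=a\,(r^2)^m$, which form a geometric sequence with positive ratio $r^2\in(0,1)$. Hence $\{ar^{2m}\}\pm A\subset\{ar^n\}\pm A$, and the previous case gives non-universality. The case $a<0$ is already absorbed by scaling by $a$, together with the sign symmetry noted above.

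For (b), let $\liminf a_{n+1}/a_n=c>0$. Then $a_{n+1}\ge (c/2)a_n$ for $n\ge n_0$, which gives $a_n\ge e^{-Cn}$ for large $n$, as the parenthetical remark says. The sequence $(a_n)$ itself need not be lacunary, because ratios may tend to $1$. We therefore extract a lacunary subsequence greedily. Set $b_1=a_{n_0}$, and let $b_{i+1}$ be the first term $a_m$ with $a_m\le \tfrac12 b_i$. Such a term exists because $a_n\to0$. Lacunarity with $q=\tfrac12$ is then automatic.

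The key step is to bound how fast $b_i$ decays. Since $a_{m-1}>\tfrac12 b_i$ and $a_m\ge (c/2)a_{m-1}$, we get
\[
b_{i+1}\ge \tfrac{c}{4}\,b_i .
\]
Iterating gives $b_i\ge b_1 (c/4)^{i-1}$, so $-\log b_i=O(i)$. Theorem~\ref{thm:arbitrary}, together with monotonicity under inclusion, then gives that $\{a_n\}\pm A$ is not universal.

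The final statement follows by taking $A=\{a_n\}$, which is infinite. The only mild obstacle is this lower bound on consecutive ratios of the extracted subsequence, and the $\liminf$ hypothesis is exactly what supplies it.
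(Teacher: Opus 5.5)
Your proof is correct and follows the paper's argument. In (a) you reduce to Theorem~\ref{thm:arbitrary} via a positive-ratio geometric subsequence (the even powers when $r<0$). In (b) you re-derive the greedy extraction of Lemma~\ref{lem:extraction} inline: minimality gives $a_{m-1}>\tfrac12 b_i$, the ratio bound then gives $b_{i+1}\ge \tfrac{c}{4}b_i$, and hence $-\log b_i=O(i)$.

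There are only two cosmetic differences from the paper. First, you absorb $a$ by dilating, $\{ar^n\}\pm A=a\bigl(\{r^n\}\pm a^{-1}A\bigr)$, which handles $a<0$ at once; the paper instead treats $a>0$ directly and uses the reflection $\rho=-1$. Second, you take ratio $q=\tfrac12$ rather than $q<\tfrac12$, which is harmless because Theorem~\ref{thm:arbitrary} thins the sequence itself.
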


\begin{corollary}\label{cor:stretched}
For $\alpha_1,\alpha_2>0$, the sumset
$\{2^{-n^{\alpha_1}}\}+\{2^{-n^{\alpha_2}}\}$ is not universal whenever
\[
\frac{1}{\alpha_1}+\frac{1}{\alpha_2}>1 .
\]
This contains the symmetric range $\alpha_1=\alpha_2\in(0,2)$ and, more
generally, every pair $\alpha_1,\alpha_2\in(0,2)$. It also includes genuinely
asymmetric pairs outside that square; for instance $\alpha_1=6/5$,
$\alpha_2=5$ is admissible. If $\alpha_1\le 1$, every infinite
second summand is admissible, by Corollary~\ref{cor:concrete}(b).
\end{corollary}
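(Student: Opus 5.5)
We will deduce Corollary~\ref{cor:stretched} directly from Theorem~\ref{thm:product}, after splitting according to whether the sequences are lacunary. Write $b_n=2^{-n^{\alpha_1}}$ and $d_m=2^{-m^{\alpha_2}}$.

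\emph{Case $\alpha_1\ge 1$ and $\alpha_2\ge1$.} Here both sequences are lacunary, since
\[
\frac{b_{n+1}}{b_n}=2^{-((n+1)^{\alpha_1}-n^{\alpha_1})}\le 2^{-1}
\]
for $\alpha_1\ge1$, and likewise for $(d_m)$. The counting functions are explicit. We have $b_n\ge e^{-W}$ iff $n^{\alpha_1}\log 2\le W$, so
\[
I(W)=\lfloor (W/\log 2)^{1/\alpha_1}\rfloor \asymp W^{1/\alpha_1},\qquad J(W)\asymp W^{1/\alpha_2}.
\]
Hence $I(W)J(W)/W\asymp W^{1/\alpha_1+1/\alpha_2-1}\to\infty$ exactly when $1/\alpha_1+1/\alpha_2>1$, which is \eqref{eq:star}. (Equivalently, this is Corollary~\ref{cor:tradeoff}(ii), since $-\log b_n=O(n^{\alpha_1})$.)

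\emph{Case $\alpha_1<1$ (or, symmetrically, $\alpha_2<1$).} Now $b_{n+1}/b_n\to1$, so the sequence itself is not lacunary, and we pass to a subsequence. Choose indices $n_k$ with $n_k^{\alpha_1}=k$ up to rounding, i.e.\ $n_k=\lceil k^{1/\alpha_1}\rceil$. Then $n_k^{\alpha_1}\in[k,k+1)$ for large $k$, and in any case $n_k^{\alpha_1}=k+O(1)$. A cleaner choice is to take $n_k$ minimal with $n_k^{\alpha_1}\ge 2k$. Consecutive exponents then differ by at least roughly $2$, and at most $2+o(1)$, since the increments $(n+1)^{\alpha_1}-n^{\alpha_1}\to0$. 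This gives a lacunary subsequence $(b_{n_k})$ with ratio about $1/4$ and $-\log b_{n_k}=O(k)$.

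Theorem~\ref{thm:arbitrary} then applies with $S=\{b_n\}$ and $A=\{d_m\}$ infinite, so the sumset is not universal. This is precisely the last sentence of the Corollary, via Corollary~\ref{cor:concrete}(b), whose hypothesis $\liminf b_{n+1}/b_n=1>0$ holds. The case $\alpha_2<1$ follows by symmetry of the sumset.

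The only delicate point is the one just handled: when $\alpha_1<1$ the sequence fails to be lacunary. The extraction of a lacunary subsequence with linear logarithmic decay is the step to verify carefully. Beyond that, the remaining checks are routine:
\begin{itemize}
\item every pair with $\alpha_1,\alpha_2\in(0,2)$ satisfies $1/\alpha_1+1/\alpha_2>1$;
\item the example $\alpha_1=6/5$, $\alpha_2=5$ works, since $5/6+1/5=31/30>1$.
\end{itemize}
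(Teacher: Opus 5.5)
Your proof is correct and follows the paper's structure. An exponent below $1$ is handled by extracting a lacunary subsequence with $-\log b_k=O(k)$ and invoking Theorem~\ref{thm:arbitrary} (equivalently Corollary~\ref{cor:concrete}(b)), and the case where both exponents are at least $1$ is handled by verifying \eqref{eq:star}. Your direct computation $I(W)\asymp W^{1/\alpha_1}$ (i.e.\ Corollary~\ref{cor:tradeoff}(ii)) is slightly cleaner than the paper's detour through Corollary~\ref{cor:tradeoff}(i) with a $\delta$-perturbation.

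The only loose end is that the statement's final sentence also covers $\alpha_1=1$ with an arbitrary second summand, which your case split does not address. This is immediate, since $\liminf_n 2^{-(n+1)}/2^{-n}=\tfrac12>0$ in Corollary~\ref{cor:concrete}(b).
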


\begin{remark}\label{rem:cantor}
Our results concern sums of two \emph{sequences}, whose truncations at scale
$e^{-W}$ contain only $I(W)J(W)$ points. Sets with \emph{exponentially} many
points per scale are far easier, and for them much stronger results are known.
Every digit set
\[
C(a):=\Big\{\,\sum_{n\ge1}\epsilon_n a_n:\ \epsilon_n\in\{0,1\}\,\Big\}
\qquad(a_n>0\ \text{summable}),
\]
and in particular every symmetric Cantor set, is a sum of \emph{three} infinite
sets, obtained by grouping the digits according to the residue of $n$ modulo
$3$ (each of the three residue classes being infinite); hence $C(a)$ is not
universal by Bourgain's theorem \cite{Bourgain}, with
no decay condition whatsoever. (This observation is recorded in
\cite{KolCantor}, where it is attributed to the referee.) For uncountable sets,
positive Newhouse thickness \cite{GLW} or positive Hausdorff dimension (see
\cite[\S3]{JLM}) also rules out universality. More recently,
Shmerkin and Yavicoli \cite{SY} proved full-measure non-universality under
positive Hausdorff or packing logarithmic-dimension hypotheses, covering many
sets of zero Hausdorff dimension. Alternatively,
applying Theorem~\ref{thm:kol} to the full $n$-th generation of a symmetric
Cantor set with gap lengths $d_n>\ell_n$ --- $2^n$ points with controlled gaps ---
yields non-universality
whenever the generation lengths satisfy $-\log\ell_{n_k}=o(2^{n_k})$ along a
subsequence \cite[Theorem~1.2]{KolCantor}, and a probabilistic refinement
produces avoiding sets of \emph{full} measure when
$-\log\ell_n=o\big(2^{n^{1-\epsilon}}\big)$ \cite[Theorem~1.3]{KolCantor}. By
contrast, even when the digit sequence $(a_n)$ is lacunary, applying
Theorem~\ref{thm:product} only to the two-digit grid
$\{a_{2i}+a_{2j-1}\}_{i,j\ge1}$ would require $-\log a_n=o(n^2)$, a far more
restrictive condition on the digit scales; we therefore make no claims about
Cantor sets. The comparison highlights the regime addressed in this paper: sets with
\emph{polynomially} many points per scale, where neither Bourgain's additive
mechanism nor exponential generation counts are available.
\end{remark}

Corollaries~\ref{cor:tradeoff}--\ref{cor:stretched} are derived in
Section~\ref{sec:corollaries}. No scale-separation condition and no comparison
between the decay rates of the two sequences is required.

Figure~\ref{fig:roadmap} summarizes the logical dependencies among the results of
this paper and the order in which they are proved.

\begin{figure}[t]
\centering
\begin{tikzpicture}[
  >={Stealth[length=2mm]},
  mach/.style={draw, rounded corners, align=center, font=\footnotesize,
               inner sep=3.5pt, fill=black!9},
  res/.style={draw, rounded corners, align=center, font=\footnotesize,
              inner sep=3.5pt, fill=blue!14},
  arr/.style={->, thick, black!70},
]
\node[mach] (kol)   at (0,5.0) {Thm~\ref{thm:kol}\\Kolountzakis};
\node[mach] (block) at (0,3.6) {Prop~\ref{prop:block}\\block principle};
\node[mach] (energy) at (3.7,5.0) {Lem~\ref{lem:energy}\\near-energy};
\node[mach] (grid)   at (3.7,3.6) {Prop~\ref{prop:grid}\\grid principle};
\node[mach] (penergy) at (3.7,0.9) {Lem~\ref{lem:packenergy}\\separated $\times$ lacunary};
\node[res] (thm1) at (7.5,3.6) {Thm~\ref{thm:product}\\product form $(\star)$};
\node[res] (thm2) at (7.5,2.4) {Thm~\ref{thm:arbitrary}\\arbitrary summand};
\node[res] (thm3) at (7.5,1.2) {Thm~\ref{thm:packing}\\packing form};
\node[res] (cor4)  at (11.0,4.3) {Cor~\ref{cor:tradeoff}\\trade-off};
\node[res] (cor57) at (11.0,3.1) {Cor~\ref{cor:symmetric},\,\ref{cor:stretched}\\symmetric, stretched};
\node[res] (cor6)  at (11.0,1.9) {Cor~\ref{cor:concrete}\\geometric $+$ any set};
\draw[arr] (kol)--(block);
\draw[arr] (energy)--(grid);
\draw[arr] (block)--(grid);
\draw[arr] (grid)--(thm1);
\draw[arr] (block)--(thm3);
\draw[arr] (penergy)--(thm3);
\draw[arr] (thm1)--(thm2);
\draw[arr] (thm3)--(thm2);
\draw[arr] (thm1)--(cor4);
\draw[arr] (cor4)--(cor57);
\draw[arr] (thm2)--(cor6);
\end{tikzpicture}
\caption{Roadmap of the paper. Grey boxes are the underlying machinery
(Section~\ref{sec:block} and Sections~\ref{sec:counting}--\ref{sec:packing});
shaded boxes are the non-universality results. Arrows denote logical implication.
Kolountzakis' criterion (Theorem~\ref{thm:kol}) gives the block principle
(Proposition~\ref{prop:block}), which together with the near-energy estimate
drives the grid principle (Proposition~\ref{prop:grid}) and hence
Theorem~\ref{thm:product}; the packing energy (Lemma~\ref{lem:packenergy}) drives
Theorem~\ref{thm:packing}. Both Theorem~\ref{thm:product} (at its endpoint
$\alpha=1$) and Theorem~\ref{thm:packing} (which contains it) yield
Theorem~\ref{thm:arbitrary}.}
\label{fig:roadmap}
\end{figure}

Throughout, $m(\cdot)$ denotes Lebesgue measure on $\R$, and $\log$ is the natural
logarithm. We write $A\subseteq S_1+S_2$ for the sumsets of extracted subsequences
used in the proofs; the conclusions transfer to the full sumsets by monotonicity.

\section{The block principle}\label{sec:block}

We begin with the criterion of Kolountzakis on which everything rests. We use the
form stated in \cite[Theorem~1.1]{KolCantor}; in particular, the chains of points
are required to exist only for \emph{arbitrarily large} $n$, not for every $n$.

\begin{theorem}[Kolountzakis]
\label{thm:kol}
Let $A\subset\R$ be infinite. Suppose that there are integers $n_k\to\infty$
and, for each $k$, points
$a_{k,1}>a_{k,2}>\dots>a_{k,n_k}>0$ in $A$ such that, with
\[
\delta_k:=\min_{1\le i<n_k}
\frac{a_{k,i}-a_{k,i+1}}{a_{k,1}-a_{k,n_k}},
\]
one has $-\log\delta_k=o(n_k)$. Then $A$ is not measure universal.
\end{theorem}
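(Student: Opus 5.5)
The plan is a random construction in the spirit of \cite{Kol97}. We build a measurable $L\subset[0,1]$ with $m(L)>0$ such that no affine copy $x+\lambda A$ with $\lambda\neq0$ lies in $L$. It suffices to destroy the finite chains $P_k=\{a_{k,1},\dots,a_{k,n_k}\}$. By affine invariance we normalize each chain to $a_{k,1}-a_{k,n_k}=1$, so consecutive gaps are at least $\delta_k$. If $x+\lambda P_k\subset[0,1]$ then $|\lambda|\le 1$. We therefore split the dilations into dyadic bands $B_j=\{2^{-j}<|\lambda|\le 2^{-j+1}\}$, $j\ge1$. The sign of $\lambda$ is handled by running the same argument for $-P_k$, which costs a factor $2$. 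For each band we choose one chain index $k_j$ and one deletion probability $p_j$, with $\sum_j p_j<1$; for instance $p_j=2^{-j-1}$.

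For band $j$ with $k=k_j$, tile $\R$ by intervals (cells) of length $h_j=\delta_k 2^{-j}/4$. Delete each cell independently with probability $p_j$, and let $R_j$ be the union of the deleted cells. Set $L=[0,1]\setminus\bigcup_j R_j$. Then $\mathbb E\,m([0,1]\cap R_j)=p_j$, so $\mathbb E\,m(L)\ge 1-\sum p_j\ge 1/2$. Since $m(L)\le 1$, this gives $\Pr(m(L)\ge 1/4)\ge 1/3$.

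Now fix $\lambda\in B_j$ and $x$ with $x+\lambda P_k\subset[0,1]$. Consecutive points of $x+\lambda P_k$ are at distance at least $\delta_k2^{-j}=4h_j$, so the $n_k$ points lie in $n_k$ distinct cells. The probability that none of these cells is deleted is $(1-p_j)^{n_k}\le e^{-p_jn_k}$.

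To pass from single pairs $(x,\lambda)$ to the whole band, discretize. Take $x$ on a net of mesh $h_j/4$ in $[-2,2]$ and $\lambda$ on a net of mesh $h_j/4$ in $B_j$; this has $O(h_j^{-2})=O(\delta_k^{-2}4^{j})$ points. For a general $(x,\lambda)$, every point $x+\lambda a_{k,i}$ is within $h_j/2$ of the corresponding point of a net copy (recall $|a_{k,i}-a_{k,1}|\le1$, after translating so that $a_{k,1}=0$). The right event is then that the $h_j/2$-neighbourhoods of the points of the net copy all meet undeleted cells. Each neighbourhood meets at most two cells, and because of the $4h_j$ separation the cell pairs for different points are distinct. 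So this event has probability at most $(1-p_j^2)^{n_k}\le e^{-p_j^2 n_k}$; working with $p_j^2$ instead of $p_j$ is harmless. A union bound gives
\[
\Pr\big(\exists\,(x,\lambda)\in[-2,2]\times B_j:\ x+\lambda P_k\subset L\big)\ \le\ C\,\delta_k^{-2}4^{j}e^{-p_j^2n_k}.
\]

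The key step is the choice of $k_j$, and it is the only place the hypothesis $-\log\delta_k=o(n_k)$ enters. We need $2\log(1/\delta_{k_j})+j\log4-p_j^2n_{k_j}\le -j-10$. With $p_j^2=4^{-j-1}$, this amounts to $n_k\ge C4^{j}\big(\log(1/\delta_k)+j\big)$. That holds for all large $k$, because $\log(1/\delta_k)/n_k\to0$ and $n_k\to\infty$. So $k_j$ exists for every $j$; note the chains are only required for arbitrarily large $n$, exactly as stated. Summing over $j$, the probability that some copy survives in some band is at most $1/8<1/3$. Hence some realization has $m(L)\ge1/4$ and contains no $x+\lambda P_{k_j}$ with $\lambda\in B_j$ for any $j$. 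Since every admissible $\lambda$ lies in some $B_j$, and $P_{k_j}\subset A$, no affine copy of $A$ lies in $L$.

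The main obstacle I anticipate is the bookkeeping in the discretization: each true copy must be controlled by a net copy whose points still occupy distinct, boundedly many cells. That is why the cell size is taken to be a fixed fraction of $\delta_k2^{-j}$. Once this is in place, everything reduces to the elementary competition between the polynomial-in-$\delta_k^{-1}$ entropy and the $e^{-cn_k}$ survival probability, which is exactly what $-\log\delta_k=o(n_k)$ decides.
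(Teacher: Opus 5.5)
The paper does not prove Theorem~\ref{thm:kol}; it quotes it from \cite{KolCantor} (going back to \cite{Kol97}), so your argument has to stand on its own. Much of it is sound: the random-deletion scheme, the measure estimate, and the discretization bookkeeping (nets, $h_j/2$-neighbourhoods meeting at most two cells, the bound $(1-p_j^2)^{n_k}$).

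\textbf{The gap is the opening reduction.} You cannot normalize \emph{each} chain to $a_{k,1}-a_{k,n_k}=1$. Lemma~\ref{lem:affine} lets you normalize $A$ once, and a single copy $x+\lambda A$ contains $x+\lambda P_k$ for every $k$ with the \emph{same} $\lambda$. Relative to the normalized chain $(P_k-a_{k,1})/D_k$, where $D_k:=a_{k,1}-a_{k,n_k}$, the dilation is $\lambda D_k$, and this depends on $k$. So your final sentence does not follow. You need some $j$ with $|\lambda|D_{k_j}\in(2^{-j},2^{-j+1}]$, and the shifted bands $(2^{-j}/D_{k_j},\,2^{-j+1}/D_{k_j}]$ need not cover the admissible range of $|\lambda|$.

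The natural repair is to normalize $\mathrm{diam}\,A=1$, index the bands by the dilation of $A$, and use cells of length $h_j=\delta_{k_j}D_{k_j}2^{-j}/4$. But then your $x$-net over $[-2,2]$ has $\asymp h_j^{-1}$ points, so you would need $p_j^2n_{k_j}\gtrsim\log(1/\delta_{k_j})+\log(1/D_{k_j})+j$. The hypothesis gives no control of $D_k$. The chains may be tiny clusters, e.g.\ arithmetic progressions of diameter $e^{-n_k^2}$, and then this fails for every choice of $k_j$. This is not merely a lossy union bound. For a fixed $\lambda$ in band $j$ there are $\gtrsim 2^{j}/D_{k_j}$ translates of the scaled chain inside $[0,1]$ with pairwise disjoint cell sets. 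Each survives the band-$j$ deletions independently with probability $(1-p_j)^{n_{k_j}}$. So once $\log(1/D_{k_j})$ is much larger than $p_jn_{k_j}$, the per-band claim you need is false with high probability.

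\textbf{The missing idea is to make each band scale-invariant,} so that translations are counted relative to the size of the scaled chain rather than relative to $[0,1]$. Make $R_j$ periodic:
\begin{itemize}
\item let $T_j$ be the least multiple of $h_j$ with $T_j\ge 2^{-j+2}D_{k_j}$;
\item delete the $T_j/h_j=O(\delta_{k_j}^{-1})$ cells of $[0,T_j)$ independently with probability $p_j$;
\item extend $T_j$-periodically.
\end{itemize}
Then:
\begin{itemize}
\item still $\mathbb E\,m([0,1]\cap R_j)=p_j$;
\item a scaled chain has diameter at most $2^{-j+1}D_{k_j}\le T_j-4h_j$, so its points stay $4h_j$-separated modulo $T_j$ and your independence argument runs unchanged;
\item by periodicity only translations in $[0,T_j)$ need to be netted.
\end{itemize}
This gives $O(\delta_{k_j}^{-1})$ translations times $O(\delta_{k_j}^{-1})$ dilations (mesh $\asymp h_j/D_{k_j}$ in $|\lambda|$). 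The union bound becomes $O(\delta_{k_j}^{-2})e^{-p_j^2n_{k_j}}$, and $-\log\delta_k=o(n_k)$ alone suffices to choose $k_j$.

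For the only use the paper makes of the theorem, Proposition~\ref{prop:block}, one has $D_k\ge(n_k-1)e^{-\psi_k}\ge e^{-\psi_k}$. There $\log(1/D_k)=o(n_k)$, and your non-periodic argument, re-indexed as above, would already suffice. It does not, however, prove Theorem~\ref{thm:kol} as stated.
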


\begin{remark}\label{rem:kolform}
The subsequence formulation is essential here: the criterion requires the chains
only for arbitrarily large cardinalities, as is also used in
\cite[Theorem~1.2]{KolCantor}. This is what allows the hypotheses in the present
paper to be stated as $\limsup$ conditions.
\end{remark}

Three elementary observations will be used repeatedly. The first records that
measure universality is monotone under inclusion; it is what lets us pass from a
convenient sub-sumset to the full sumset.

\begin{lemma}\label{lem:mono}
If $A\subseteq B\subset\R$ and $A$ is not measure universal, then $B$ is not
measure universal. Consequently, if $A\subseteq S_1+S_2$ is not measure universal
then neither is $S_1+S_2$.
\end{lemma}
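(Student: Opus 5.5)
The statement to prove is Lemma~\ref{lem:mono}, which is essentially a one-line consequence of the definition. I will argue by contraposition. Suppose $B$ were measure universal, and let $L\subset\R$ be any measurable set with $m(L)>0$. By the definition of universality for $B$ there exist $x\in\R$ and $\lambda\neq 0$ with $x+\lambda B\subset L$. Since $A\subseteq B$, the inclusion $\lambda A\subseteq\lambda B$ holds for the same $\lambda$, and translating by $x$ preserves inclusions, so
\[
x+\lambda A\subseteq x+\lambda B\subset L .
\]
Because $L$ was an arbitrary measurable set of positive measure, $A$ would then be measure universal, contradicting the hypothesis. Hence $B$ is not measure universal.

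The second assertion is the special case $B=S_1+S_2$, with $A$ any subset of this sumset (in practice, a sub-sumset built from extracted subsequences). It follows immediately from the first part.

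Neither step poses any real obstacle. The only point to check is that the same affine map $t\mapsto x+\lambda t$ works for both sets, and this is clear because affine images respect inclusion. No measure-theoretic input beyond the definition is needed.
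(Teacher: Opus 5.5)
Your proposal is correct and matches the paper's proof: both argue by contraposition, observing that any affine copy $x+\lambda B\subset L$ yields $x+\lambda A\subset x+\lambda B\subset L$, so universality of $B$ would force universality of $A$. The second assertion is then the special case $B=S_1+S_2$, exactly as you say.
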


\begin{proof}
If $x+\lambda B\subset L$ then $x+\lambda A\subset x+\lambda B\subset L$; thus
``$B$ measure universal'' implies ``$A$ measure universal'', and the
contrapositive is the assertion.
\end{proof}

The second records that measure universality is an affine invariant; it lets us
translate and reflect the sets under consideration at will.

\begin{lemma}\label{lem:affine}
Let $\rho\neq 0$ and $\tau\in\R$. Then $E\subset\R$ is measure universal if and
only if $\rho E+\tau$ is measure universal.
\end{lemma}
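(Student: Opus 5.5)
The plan is to prove the two implications separately, each by pushing an affine copy back and forth through the map $\phi(y)=\rho y+\tau$. This map is an invertible affine transformation of $\R$ whose inverse $\phi^{-1}(y)=\rho^{-1}(y-\tau)$ is of the same form. So it suffices to prove one direction for an arbitrary pair $(\rho,\tau)$ with $\rho\neq0$; the converse then follows by applying that direction to $\rho E+\tau$ with the pair $(\rho^{-1},-\rho^{-1}\tau)$, since $\rho^{-1}(\rho E+\tau)-\rho^{-1}\tau=E$.

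For the direction ``$E$ universal $\Rightarrow$ $\rho E+\tau$ universal'', fix a measurable $L$ with $m(L)>0$. By universality of $E$ there are $x\in\R$ and $\lambda\neq0$ with $x+\lambda E\subset L$. We want $x'$ and $\lambda'\neq0$ with
\[
x'+\lambda'(\rho E+\tau)\subset L .
\]
We choose
\[
\lambda'=\lambda/\rho\neq0,\qquad x'=x-\lambda\tau/\rho .
\]
Then
\[
x'+\lambda'(\rho E+\tau)=x-\lambda\tau/\rho+\lambda E+\lambda\tau/\rho=x+\lambda E\subset L .
\]
Since $L$ was arbitrary, $\rho E+\tau$ is measure universal.

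Nothing here is a real obstacle. The only points to check are that $\lambda'\neq0$, which holds because $\lambda,\rho\neq0$, and that the same test set $L$ is used on both sides, so no measure-transport argument is needed. The lemma is pure bookkeeping of affine parameters.
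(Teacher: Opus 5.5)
Your proof is correct and is essentially the paper's argument. The paper handles both directions at once by observing that $x+\lambda(\rho E+\tau)=(x+\lambda\tau)+(\lambda\rho)E$ and that $(x,\lambda)\mapsto(x+\lambda\tau,\lambda\rho)$ is a bijection of $\R\times(\R\setminus\{0\})$; you write down the inverse of that parameter map for one direction and get the converse by applying it to the inverse affine map $y\mapsto\rho^{-1}(y-\tau)$.
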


\begin{proof}
For any $x\in\R$ and $\lambda\neq 0$,
\[
x+\lambda(\rho E+\tau)=(x+\lambda\tau)+(\lambda\rho)E,
\]
and as $(x,\lambda)$ ranges over $\R\times(\R\setminus\{0\})$, so does
$(x+\lambda\tau,\lambda\rho)$. Hence $E$ and $\rho E+\tau$ have exactly the same
families of affine copies.
\end{proof}

The third is the standard complementary (duality) form, included for context.

\begin{lemma}\label{lem:dual}
Let $E\subset\R$ and let $L\subset\R$ be measurable. Then $x+\lambda E\not\subset
L$ for every $x\in\R$ and every $\lambda\neq 0$ if and only if $\lambda E+L^c=\R$
for every $\lambda\neq 0$.
\end{lemma}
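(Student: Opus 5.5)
We argue by direct set-theoretic duality: the condition ``$x+\lambda E\not\subset L$ for all $x$'' will be rewritten pointwise and then quantified over $\lambda$. Fix $\lambda\neq 0$. The key observation is that for a given $x\in\R$,
\[
x+\lambda E\not\subset L \iff \exists\, e\in E:\ x+\lambda e\in L^c \iff \exists\, e\in E,\ \ell\in L^c:\ x=\ell-\lambda e .
\]
Hence the set of $x$ for which $x+\lambda E\not\subset L$ is exactly $L^c-\lambda E = L^c+(-\lambda)E$.

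Consequently, for this fixed $\lambda$, the statement ``$x+\lambda E\not\subset L$ for every $x\in\R$'' is equivalent to $(-\lambda)E+L^c=\R$. Quantifying over all $\lambda\neq 0$ finishes the proof. The map $\lambda\mapsto-\lambda$ is a bijection of $\R\setminus\{0\}$ onto itself, so ``for every $\lambda\neq 0$, $(-\lambda)E+L^c=\R$'' is the same as ``for every $\lambda\neq0$, $\lambda E+L^c=\R$''.

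Both directions of the equivalence come out of the same chain of ``iff''s, so no separate argument is needed for each implication. Measurability of $L$ plays no role; it is present only because of the context in which the lemma is used. There is no real obstacle here. The only point needing care is the sign: the natural set that appears is $L^c-\lambda E$, and one must invoke the symmetry $\lambda\leftrightarrow-\lambda$ of the quantifier to bring it to the stated form $\lambda E+L^c$.
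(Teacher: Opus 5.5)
Your proof is correct and is essentially the paper's argument. The paper proves the two implications separately by contradiction, invoking the hypothesis at $-\lambda$ in each direction; you package the same unwinding into the single identity $\{x:\ x+\lambda E\not\subset L\}=L^c+(-\lambda)E$ and then reindex by $\lambda\mapsto-\lambda$, and you are right that measurability of $L$ plays no role.
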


\begin{proof}
Assume the first statement and fix $\lambda\neq 0$. If $y\notin\lambda E+L^c$,
then for every $t\in E$ we have $y-\lambda t\notin L^c$, i.e.\ $y-\lambda t\in L$;
thus $y-\lambda E\subset L$, a contradiction. Conversely, assume the second and
fix $x,\lambda$. If $x+\lambda E\subset L$, then $x\notin-\lambda E+L^c$,
contradicting the hypothesis with $-\lambda$.
\end{proof}

We now state the principle that drives all of our results. It is a convenient
repackaging of Theorem~\ref{thm:kol}: rather than listing the points one by one,
we produce finite blocks of arbitrarily large cardinality inside a bounded
interval. The point is the gap tolerance: a block of $n$ points may have minimal
gap as small as $e^{-o(n)}$, and blocks are needed only along a sequence of
cardinalities.

\begin{proposition}\label{prop:block}
Let $A\subset\R$ be bounded and infinite. Suppose that there are integers
$n_k\to\infty$, real numbers $\psi_k=o(n_k)$, and finite sets $\Phi_k\subset A$
such that
\[
|\Phi_k|\ge n_k,
\qquad
\min_{\substack{x,y\in\Phi_k\\x\neq y}}|x-y|\ge e^{-\psi_k}.
\]
Then $A$ is not measure universal.
\end{proposition}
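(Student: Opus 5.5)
The plan is to reduce directly to Theorem~\ref{thm:kol}. Fix $k$, and arrange $\Phi_k$ in decreasing order. We may discard points to arrange $|\Phi_k|$ exactly equal to some $n'_k\ge n_k$, or simply keep all points. Any subset of $\Phi_k$ still has minimal gap at least $e^{-\psi_k}$, so nothing is lost by trimming. One detail is that Theorem~\ref{thm:kol} requires the chain to consist of positive points. Since $A$ is bounded, Lemma~\ref{lem:affine} lets us translate $A$ to $A+\tau\subset(0,\infty)$ with $\tau$ fixed. Translation preserves the gaps, the cardinalities and measure universality, so we may assume $A\subset(0,\infty)$ from the outset.

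Write $\Phi_k=\{a_{k,1}>\dots>a_{k,m_k}\}$ with $m_k=|\Phi_k|\ge n_k$. Each consecutive gap satisfies $a_{k,i}-a_{k,i+1}\ge e^{-\psi_k}$. The span satisfies $a_{k,1}-a_{k,m_k}\le \operatorname{diam}(A)=:D<\infty$, which uses the boundedness of $A$. Hence
\[
\delta_k\ge \frac{e^{-\psi_k}}{D},\qquad -\log\delta_k\le \psi_k+\log D .
\]

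It remains to check that $-\log\delta_k=o(m_k)$. Here $\psi_k=o(n_k)$ and $n_k\le m_k$, so $\psi_k/m_k\le \psi_k^+/n_k\to0$. The term $\log D$ is a constant, and $m_k\ge n_k\to\infty$, so $\log D/m_k\to 0$. If some $\psi_k$ are negative the bound only improves. Also $\delta_k\le 1$ always, so $-\log\delta_k\ge0$ and only the upper bound matters.

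Theorem~\ref{thm:kol} then applies with chain lengths $m_k\to\infty$, so the translate of $A$, and hence $A$ itself by Lemma~\ref{lem:affine}, is not measure universal.

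The only step requiring care is matching the exact form of Theorem~\ref{thm:kol}: positivity of the points, and the normalization by the span. Both are handled by translation and by the diameter bound, so I expect no genuine obstacle. The substance of the paper lies in constructing the blocks $\Phi_k$, not in this repackaging.
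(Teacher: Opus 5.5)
Your proof is correct and follows the paper's argument essentially verbatim: translate $A$ into the positive half-line via Lemma~\ref{lem:affine}, bound the span of each chain by a fixed constant (the paper uses $M$ with $A\subset(0,M]$, you use $\operatorname{diam}(A)$), deduce $-\log\delta_k\le\psi_k+O(1)$, and invoke Theorem~\ref{thm:kol}. The only cosmetic difference is that you keep all $m_k=|\Phi_k|$ points instead of trimming to exactly $n_k$. This is equally valid, since $n_k\le m_k$ and $\psi_k=o(n_k)$ give $\psi_k=o(m_k)$.
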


\begin{proof}
Choose $\tau\in\R$ and $M>0$ such that $A+\tau\subset(0,M]$. Translation does
not change the cardinalities or gaps of the blocks, so it suffices, by
Lemma~\ref{lem:affine}, to prove that $A+\tau$ is not measure universal. Replacing
$A$ and $\Phi_k$ by their translates, assume that $A\subset(0,M]$.

For each $k$, choose $n_k$ points from $\Phi_k$ and list them in decreasing order,
$a_1>\cdots>a_{n_k}>0$. Then
\[
a_i-a_{i+1}\ge e^{-\psi_k}
\quad (1\le i<n_k),
\qquad
a_1-a_{n_k}\le M.
\]
Consequently,
\[
\delta_{n_k}
=\min_{1\le i<n_k}\frac{a_i-a_{i+1}}{a_1-a_{n_k}}
\ge \frac{e^{-\psi_k}}{M},
\]
and therefore
\[
-\log\delta_{n_k}\le \psi_k+\log M=o(n_k).
\]
Theorem~\ref{thm:kol} now applies.
\end{proof}

\begin{remark}[Why the $o(n)$ condition matters]\label{rem:tolerance}
The distinction between $o(n)$ and $O(n)$ already contains a central open case of
the conjecture. Indeed, the single geometric sequence produces blocks at the
$O(n)$ level: taking $a_i=2^{-i}$ for $1\le i\le n$ gives $a_1=\tfrac12$,
minimal gap $2^{-n}$, and hence
$-\log\delta_n=\log(2^{n-1}-1)=O(n)$. Thus a version
of Theorem~\ref{thm:kol} with $O(n)$ in place of $o(n)$ would imply that
$\{2^{-n}\}$ is not measure universal.
\end{remark}

\section{Grids of two lacunary sequences}\label{sec:grids}

Throughout this section, $(b_i)_{i\ge1}$ and $(d_j)_{j\ge1}$ denote lacunary
sequences in $(0,\infty)$ with ratios $q_1,q_2\in(0,1)$ respectively, and $I$, $J$
denote their counting functions,
\[
I(W):=\#\{i:\ b_i\ge e^{-W}\},\qquad J(W):=\#\{j:\ d_j\ge e^{-W}\} .
\]
Since lacunary sequences are strictly decreasing,
$\{i:\ b_i\ge e^{-W}\}=\{1,\dots,I(W)\}$ is an initial segment, and $I$ is
nondecreasing in $W$.

\subsection{Counting functions and reduction}\label{sec:counting}

\begin{lemma}\label{lem:counting}
Let $(b_i)$ be lacunary with ratio $q\in(0,1)$ and counting function $I$.
\begin{enumerate}
\item[(i)] $I(W)\le\dfrac{W}{\log(1/q)}+C_0$ for all $W>0$, with
$C_0:=1+\dfrac{\max(0,\log b_1)}{\log(1/q)}$. In particular $I(W)=O(W)$.
\item[(ii)] If $-\log b_i\le C i^{\alpha}$ for some $C>0$, $\alpha\ge 1$ and all
$i\ge i_0$, then $I(W)\ge (W/C)^{1/\alpha}-i_0$ for all $W>0$; for $\alpha=1$ this
gives $I(W)\ge W/(2C)$ for all sufficiently large $W$.
\item[(iii)] If $-\log b_i=o(i^{\alpha})$ for some $\alpha>0$, then
$I(W)/W^{1/\alpha}\to\infty$ as $W\to\infty$.
\end{enumerate}
\end{lemma}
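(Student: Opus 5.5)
All three parts follow from the lacunary bound $b_i\le q^{i-1}b_1$ together with the fact that $\{i: b_i\ge e^{-W}\}=\{1,\dots,I(W)\}$ is an initial segment. Each part is a direct comparison of $-\log b_i$ with $W$.

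For (i), fix $W>0$ and suppose $I(W)\ge 1$; otherwise there is nothing to prove. Put $i=I(W)$, so $b_i\ge e^{-W}$. Iterating $b_{k+1}\le q b_k$ gives $e^{-W}\le b_i\le q^{i-1}b_1$. Taking logarithms,
\[
(i-1)\log(1/q)\le W+\log b_1\le W+\max(0,\log b_1).
\]
Dividing by $\log(1/q)$ gives $I(W)\le W/\log(1/q)+C_0$ with the stated $C_0$, and $O(W)$ follows.

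For (ii), let $m$ be the largest integer with $C m^{\alpha}\le W$, so $m\ge (W/C)^{1/\alpha}-1$. Every $i$ with $i_0\le i\le m$ satisfies $-\log b_i\le Ci^\alpha\le W$, so $b_i\ge e^{-W}$. Since the admissible indices form an initial segment, $I(W)\ge m$ whenever $m\ge i_0$. The only bookkeeping point is the additive constant: we get $I(W)\ge m\ge (W/C)^{1/\alpha}-1$, which implies $I(W)\ge (W/C)^{1/\alpha}-i_0$ since $i_0\ge 1$. When $m<i_0$ the right-hand side $(W/C)^{1/\alpha}-i_0$ is negative, so the bound holds trivially. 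For $\alpha=1$ this gives $I(W)\ge W/C-i_0\ge W/(2C)$ once $W\ge 2Ci_0$.

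For (iii), fix $\varepsilon>0$ and choose $i_0$ with $-\log b_i\le \varepsilon i^\alpha$ for $i\ge i_0$. If $\alpha\ge1$, apply (ii) with $C=\varepsilon$ to get
\[
I(W)/W^{1/\alpha}\ge \varepsilon^{-1/\alpha}-i_0W^{-1/\alpha},
\]
whose $\liminf$ is at least $\varepsilon^{-1/\alpha}$. Letting $\varepsilon\to0$ gives $I(W)/W^{1/\alpha}\to\infty$.

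For $\alpha<1$, (ii) does not apply as stated, so I repeat its argument directly; it never used $\alpha\ge1$. Take $m=\lfloor (W/\varepsilon)^{1/\alpha}\rfloor$. For $i_0\le i\le m$ we have $-\log b_i\le\varepsilon i^\alpha\le W$, hence $I(W)\ge m$ once $m\ge i_0$. The same limit follows. In fact for $\alpha<1$ the hypothesis is incompatible with lacunarity, since lacunarity forces $-\log b_i\ge (i-1)\log(1/q)-\log b_1$, which grows linearly. So that case is vacuous anyway.

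I expect no real obstacle here. The only care needed is the off-by-one and constant bookkeeping in (ii) and the uniformity in $\varepsilon$ in (iii).
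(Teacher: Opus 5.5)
Your proposal is correct and follows the paper's proof essentially step for step. Part (i) comes from $e^{-W}\le b_{I(W)}\le q^{I(W)-1}b_1$. Part (ii) comes from noting that every index $i_0\le i\le (W/C)^{1/\alpha}$ is admissible; you use the initial-segment structure to get $I(W)\ge\lfloor (W/C)^{1/\alpha}\rfloor$, where the paper simply counts these integers. Part (iii) reruns that computation with an arbitrarily small constant, which indeed never uses $\alpha\ge1$.

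Your side remark is also right, and it extends a little further than you state: since $-\log b_i\ge (i-1)\log(1/q)-\log b_1$, the hypothesis of (iii) is incompatible with lacunarity for every $\alpha\le1$, not just $\alpha<1$.
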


\begin{proof}
(i) If $I(W)=0$, the assertion is immediate. Otherwise write $I=I(W)$.
Lacunarity gives $b_I\le q^{I-1}b_1$, while $b_I\ge e^{-W}$; hence
$(I-1)\log(1/q)\le W+\log b_1\le W+\max(0,\log b_1)$, which is the
claim.

(ii) Every integer $i\in[i_0,(W/C)^{1/\alpha}]$ satisfies
$-\log b_i\le Ci^{\alpha}\le W$, i.e.\ $b_i\ge e^{-W}$; the number of such $i$ is
at least $(W/C)^{1/\alpha}-i_0$.

(iii) Fix $\delta>0$; there is $i_0$ with $-\log b_i<\delta i^{\alpha}$ for
$i\ge i_0$. By the computation in (ii), $I(W)\ge(W/\delta)^{1/\alpha}-i_0$, hence
$\liminf_{W\to\infty}I(W)/W^{1/\alpha}\ge\delta^{-1/\alpha}$. As $\delta>0$ was
arbitrary, $I(W)/W^{1/\alpha}\to\infty$.
\end{proof}

\begin{remark}\label{rem:dyadic}
For a lacunary sequence, $-\log b_i=O(i)$ is equivalent to
$\liminf_{W\to\infty}I(W)/W>0$. Indeed, one direction is
Lemma~\ref{lem:counting}(ii) with $\alpha=1$; conversely, if $I(W)\ge cW$ for all
large $W$, then taking $W=i/c$ gives $b_i\ge e^{-i/c}$ for all large $i$. By
Lemma~\ref{lem:counting}(i) this is the fastest possible growth of a lacunary
counting function, so the hypothesis of Theorem~\ref{thm:arbitrary} asks that
$(b_i)$ be as dense as a lacunary sequence can be. Equivalently, in terms of the
set $S$ alone: $S$ satisfies the hypothesis of Theorem~\ref{thm:arbitrary} if and
only if $S$ meets the dyadic block $[2^{-m-1},2^{-m})$ for a set of integers $m$
of positive lower density. (Given such blocks, pick one point per block and pass
to every other selected block to obtain a lacunary subsequence with ratio
$\le\tfrac12$ and counting function $\gtrsim W$. Conversely, a lacunary
subsequence with counting function $\gtrsim W$ has only $O(1)$ terms in each
dyadic block and therefore occupies a positive proportion of the blocks up to
level $W$.)
\end{remark}

\begin{lemma}[Thinning]\label{lem:thinning}
Let $(b_i)$ be lacunary with ratio $q$ and counting function $I$, and let
$\ell\in\N$. Then $(b_{\ell i})_{i\ge1}$ is lacunary with ratio $q^{\ell}$ and its
counting function is $I_{\ell}(W)=\lfloor I(W)/\ell\rfloor$. Consequently, if a
pair of lacunary sequences satisfies \eqref{eq:star}, then so does any pair of
thinned subsequences $(b_{\ell i})$, $(d_{\ell' j})$.
\end{lemma}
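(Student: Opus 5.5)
We verify the three assertions in turn; they are elementary consequences of the definitions, and the only point requiring care is the exact floor formula for the counting function.

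\emph{Lacunarity of the thinned sequence.} Set $b'_i:=b_{\ell i}$. Iterating the hypothesis $b_{k+1}\le q\,b_k$ over the $\ell$ indices $k=\ell i,\dots,\ell i+\ell-1$ gives
\[
b'_{i+1}=b_{\ell(i+1)}\le q^{\ell}\,b_{\ell i}=q^{\ell}\,b'_i ,
\]
and $q^{\ell}\in(0,1)$. So $(b'_i)$ is lacunary with ratio $q^{\ell}$.

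\emph{The counting function.} As noted at the start of this section, the index set $\{k:\ b_k\ge e^{-W}\}$ is the initial segment $\{1,\dots,I(W)\}$. Hence
\[
I_\ell(W)=\#\{i\ge1:\ b_{\ell i}\ge e^{-W}\}=\#\{i\ge1:\ \ell i\le I(W)\}=\lfloor I(W)/\ell\rfloor .
\]
This holds exactly, including when $I(W)<\ell$, where both sides are $0$.

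\emph{Preservation of \eqref{eq:star}.} Write $I=I(W)$ and $J=J(W)$. The bound $\lfloor x\rfloor\ge x-1$ gives
\[
I_\ell(W)\,J_{\ell'}(W)\ge\Big(\frac{I}{\ell}-1\Big)\Big(\frac{J}{\ell'}-1\Big)\ge\frac{IJ}{\ell\ell'}-\frac{I}{\ell}-\frac{J}{\ell'} .
\]
Divide by $W$. By Lemma~\ref{lem:counting}(i), $I(W)/W$ and $J(W)/W$ stay bounded as $W\to\infty$, so the two subtracted terms contribute $O(1)$. Taking $\limsup$ along the sequence $W_k\to\infty$ on which $I(W_k)J(W_k)/W_k\to\infty$, the right-hand side tends to $\infty$. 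Since $\ell\ell'$ is a fixed constant, this shows that $\limsup_{W\to\infty} I_\ell(W)J_{\ell'}(W)/W=\infty$, which is \eqref{eq:star} for the thinned pair.

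The only step needing any attention is this last one, since the floor could in principle destroy a product condition. The uniform linear upper bound from Lemma~\ref{lem:counting}(i) is exactly what makes the error terms $O(W)$, so that they are dominated by the superlinear product.
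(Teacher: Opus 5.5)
Your proof is correct and is essentially the paper's argument. The lacunarity and floor-formula parts coincide. For \eqref{eq:star}, both arguments rely on the linear bound of Lemma~\ref{lem:counting}(i): the paper uses it to show $I,J\to\infty$ and then applies $\lfloor x\rfloor\ge x/2$ for $x\ge1$, while you use it to absorb the additive floor error $(I/\ell+J/\ell')/W=O(1)$.

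One small slip: multiplying $\lfloor x\rfloor\ge x-1$ and $\lfloor y\rfloor\ge y-1$ is invalid when $x,y<1$, since the product of floors is then $0<(x-1)(y-1)$. The bound you actually use, $\lfloor x\rfloor\lfloor y\rfloor\ge xy-x-y$ for $x,y\ge0$, follows directly by writing $\lfloor x\rfloor=x-\theta$ and $\lfloor y\rfloor=y-\theta'$ with $\theta,\theta'\in[0,1)$, so the conclusion stands.
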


\begin{proof}
Lacunarity of the thinned sequence is clear. Since the index set
$\{i:b_i\ge e^{-W}\}$ is the initial segment $\{1,\dots,I(W)\}$, we have
$b_{\ell i}\ge e^{-W}$ if and only if $\ell i\le I(W)$, whence
$I_{\ell}(W)=\lfloor I(W)/\ell\rfloor$. For the last claim, first note that
\eqref{eq:star} forces $I(W)\to\infty$ and $J(W)\to\infty$: if, say, $I$ were
bounded, then by Lemma~\ref{lem:counting}(i) applied to $(d_j)$ we would have
$I(W)J(W)=O(W)$, contradicting \eqref{eq:star}. Hence for all large $W$,
$\lfloor I(W)/\ell\rfloor\ge I(W)/(2\ell)$ and
$\lfloor J(W)/\ell'\rfloor\ge J(W)/(2\ell')$, so
\[
\limsup_{W\to\infty}\frac{I_{\ell}(W)J_{\ell'}(W)}{W}\ \ge\
\limsup_{W\to\infty}\frac{I(W)J(W)}{4\ell\ell'\,W}\ =\ \infty .
\]
\end{proof}

By Lemma~\ref{lem:thinning} (with $\ell$
chosen so that $q_1^{\ell}<\tfrac12$, and similarly for $(d_j)$) we may and do
assume from now on that
\[
q:=\max(q_1,q_2)<\tfrac12 ,
\]
at the cost of replacing the sequences by thinned subsequences; condition
\eqref{eq:star} is preserved, sumsets of the thinned subsequences are contained in
those of the original ones, and pointwise decay hypotheses of the form
$-\log b_i\le Ci^{\alpha}$ are preserved with $C$ replaced by $C\ell^{\alpha}$.
This reduction is harmless only because two sequences are summed: thinning a
single sequence cannot help, since a lone lacunary sequence has only $I(W)=O(W)$
separated points above scale $e^{-W}$ whatever its ratio --- the obstruction
isolated in Remark~\ref{rem:tolerance}.

\subsection{The near-energy lemma}\label{sec:energy}

The heart of the matter is that the cross-sums have uniformly controlled
clustering. Define, for $W>0$, the near-additive-energy
\[
E(W):=\#\Big\{(i,i',j,j'):\ i,i'\le I(W),\ j,j'\le J(W),\
\big|(b_i-b_{i'})+(d_j-d_{j'})\big|\le e^{-W}\Big\}
\]
and the signed near-energy
\[
E^-(W):=\#\Big\{(i,i',j,j'):\ i,i'\le I(W),\ j,j'\le J(W),\
\big|(b_i-b_{i'})-(d_j-d_{j'})\big|\le e^{-W}\Big\}.
\]
\begin{lemma}\label{lem:energy}
With $q=\max(q_1,q_2)<\tfrac12$ there is a constant $C=C(q)$ such that
\[
E(W) = E^-(W) \le C\,I(W)\,J(W)\qquad\text{for all }W>0 .
\]
\end{lemma}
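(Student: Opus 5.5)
The first step is the identity $E(W)=E^-(W)$. The substitution $(j,j')\mapsto(j',j)$ is a bijection of $\{1,\dots,J(W)\}^2$ onto itself, and it sends $d_j-d_{j'}$ to $-(d_j-d_{j'})$. It therefore maps the quadruples counted by $E(W)$ bijectively onto those counted by $E^-(W)$, so only $E(W)$ needs to be bounded. Throughout I write $\eta:=e^{-W}$ and restrict to indices $i,i'\le I(W)$, $j,j'\le J(W)$, so that every $b_i$ and $d_j$ used is $\ge\eta$. The basic tool is a difference estimate: for $i<i'$, lacunarity with ratio $q<\tfrac12$ gives
\[
(1-q)\,b_i\le b_i-b_{i'}\le b_i ,
\]
and the same holds for $(d_j)$. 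Thus a nonzero difference has size comparable to its larger term.

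I split the quadruples into three classes. The diagonal class $i=i'$, $j=j'$ contributes exactly $I(W)J(W)$.

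The semi-diagonal classes are empty. Suppose $j=j'$ and $i<i'$, and the inequality $b_i-b_{i'}\le\eta$ holds. Then $(1-q)b_i\le\eta$, so $b_{i'}\le qb_i\le \tfrac{q}{1-q}\eta<\eta$, because $q<\tfrac12$. This contradicts $i'\le I(W)$. The case $i=i'$, $j\ne j'$ is symmetric. This is exactly where the reduction $q<\tfrac12$ is used.

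In the off-diagonal class, $i\neq i'$ and $j\ne j'$. Fix the pair $(i,i')$ and set $x=b_i-b_{i'}$. The quadruple is counted only if $y:=d_j-d_{j'}$ lies in the interval $[-x-\eta,-x+\eta]$, which has length $2\eta$. I claim that only $O_q(1)$ pairs $(j,j')$ can satisfy this.

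First, the larger term is pinned down. Let $d_n=d_{\min(j,j')}$. Then $|y|\in[(1-q)d_n,d_n]$, and $|y|$ lies within $\eta\le d_n$ of $|x|$. Hence $d_n$ lies in a window $[c_1|x|,c_2|x|]$ with constants depending only on $q$. Concretely, $d_n\ge |y|\ge |x|-\eta$ and $|x|\ge (1-q)d_n-\eta$; when $|x|$ is small relative to $\eta$ the second inequality forces $d_n=O(\eta)$ instead. In either case $d_n$ lies in a multiplicative window of bounded ratio, so by lacunarity there are $O_q(1)$ choices of $n$. The sign of $y$ then determines which of $j,j'$ equals $n$.

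Second, the smaller term is pinned down. With $n$ fixed, the values $d_n-d_{m}$ for $m>n$ are mutually separated by at least $\min|d_m-d_{m'}|\ge(1-q)\eta$, again by the difference estimate and $d_m\ge\eta$. Only $O_q(1)$ of these values can lie in an interval of length $2\eta$.

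Summing over the pairs $(i,i')$, the off-diagonal count is at most $C_q\,I(W)^2$. Running the same argument with the roles of the two sequences exchanged (fix $(j,j')$, count $(i,i')$) bounds it by $C_q\,J(W)^2$. Hence the off-diagonal count is at most $C_q\min(I,J)^2\le C_q\,IJ$. Adding the three classes gives $E(W)\le (1+C_q)I(W)J(W)$.

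The main obstacle is the off-diagonal counting in the regime $|x|\lesssim\eta$, that is, at the bottom scale. There the multiplicative window for $d_n$ degenerates, and one must check with explicit constants that only boundedly many $d_n\ge\eta$ lie below $C\eta$. This follows because lacunarity gives at most $1+\log C/\log(1/q)$ terms of $(d_j)$ in $[\eta,C\eta]$, but the constants need care. The other point needing care is to use the trick $\min(I,J)^2\le IJ$ rather than attempting a direct $O(IJ)$ bound.
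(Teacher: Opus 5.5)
Your proof is correct, but it organizes the off-diagonal count differently from the paper. The paper splits the quadruples with $i\neq i'$, $j\neq j'$ by sign. Same-sign ones are excluded because $2(1-q)e^{-W}>e^{-W}$, which is its only use of $q<\tfrac12$. Opposite-sign ones are counted along the chain $i\to j'\to j\to i'$: $d_{j'}$ is localized in a multiplicative window around $b_i$, $j$ is left free, and $b_{i'}$ is pinned to an interval of length $2e^{-W}$. This gives $O_q(IJ)$ in one pass.

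You instead fix the whole difference $x=b_i-b_{i'}$ and show that only $O_q(1)$ pairs $(j,j')$ produce a difference within $e^{-W}$ of $-x$. That gives $O_q(I^2)$, then $O_q(J^2)$ by symmetry, and hence $O_q(\min(I,J)^2)\le O_q(IJ)$. You also correctly note that for $q<\tfrac12$ the semi-diagonal classes are empty, whereas the paper only bounds them by $O_q(I)+O_q(J)$.

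What your route buys: the sharper estimate $E(W)\le IJ+C_q\min(I,J)^2$, whose off-diagonal part never uses $q<\tfrac12$. Its one-sided half is exactly the mechanism of Lemma~\ref{lem:packenergy} (fix $(a,a')$, then $O_q(1)$ pairs $(j',j)$). So it shows that the cap $\min(N,J)$ in Theorem~\ref{thm:packing} comes from losing the symmetrization step when one factor is merely separated.

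What the paper's route buys: by leaving one index free, it gets the product bound directly. So your closing remark that a direct $O(IJ)$ bound should be avoided is not accurate.

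The bottom-scale regime you flag is harmless. In all cases $d_n\in[\max(e^{-W},|x|-e^{-W}),\,(|x|+e^{-W})/(1-q)]$, a window of ratio at most $3/(1-q)$. In fact $|x|\ge\frac{1-q}{q}e^{-W}>e^{-W}$ always, which also fixes the sign of $y$.
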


We use one structural fact repeatedly: for a lacunary sequence of ratio $\le q$,
\begin{equation}\label{eq:gapfact}
\text{if } a>a' \text{ are distinct terms with } a\ge e^{-W}, \text{ then }
a-a'\ge(1-q)\,a\ge(1-q)e^{-W}.
\end{equation}
(If $a=b_i$, $a'=b_{i'}$ with $i<i'$, then $a'\le q^{\,i'-i}a\le qa$.) Two
consequences: an interval of length $2e^{-W}$ contains at most
$\frac{2}{1-q}+1=O_q(1)$ terms that are $\ge e^{-W}$; and a multiplicative window
$[\rho,K\rho]$ contains at most $\frac{\log K}{\log(1/q)}+1=O_{q,K}(1)$ terms.

\begin{proof}[Proof of Lemma~\ref{lem:energy}]
The involution
$(i,i',j,j')\mapsto(i,i',j',j)$ maps the defining condition of $E^-(W)$ to
that of $E(W)$, so $E^-(W)=E(W)$. It remains to prove the asserted bound for
$E(W)$. We classify the quadruples according to whether $i=i'$, whether $j=j'$,
and, in the remaining cases, the signs of $b_i-b_{i'}$ and $d_j-d_{j'}$.

\textbf{(A) Diagonal $i=i'$, $j=j'$.} These number $I(W)J(W)$.

\textbf{(B) $i=i'$, $j\neq j'$.} The condition becomes $|d_j-d_{j'}|\le e^{-W}$.
With $j<j'$, \eqref{eq:gapfact} gives $|d_j-d_{j'}|\ge(1-q)d_j$, so
$d_j\le e^{-W}/(1-q)$; combined with $d_j\ge e^{-W}$ this confines $d_j$ to a
multiplicative window of width $(1-q)^{-1}$, i.e.\ $O_q(1)$ choices of $j$. For each such $j$, the partner $d_{j'}\in[d_j-e^{-W},d_j)$ has only
$O_q(1)$ possible values. Including the reverse ordering $j'>j$ changes only the
constant. Thus case \textbf{(B)} contributes $O_q(I(W))$. Case \textbf{(C)}, in
which $i\neq i'$ and $j=j'$, is symmetric and contributes $O_q(J(W))$.

\textbf{(D) Same sign, $i\neq i'$ and $j\neq j'$.} Say $i<i'$ and $j<j'$ (the case
$i>i'$, $j>j'$ is identical up to an overall sign). Then both $b_i-b_{i'}$ and
$d_j-d_{j'}$ are positive, so by \eqref{eq:gapfact},
\[
\big|(b_i-b_{i'})+(d_j-d_{j'})\big|\ \ge\ (1-q)b_i+(1-q)d_j\ \ge\ 2(1-q)e^{-W}\ >\
e^{-W},
\]
the last inequality because $q<\tfrac12$ gives $2(1-q)>1$. Hence there are no such
quadruples. This is the only place where the strict bound $q<\tfrac12$ is used.

\textbf{(E) Opposite sign, $i\neq i'$ and $j\neq j'$.} Say $i<i'$ and $j>j'$ (the
mirror case $i>i'$, $j<j'$ is identical). Write
$\Delta b:=b_i-b_{i'}\in[(1-q)b_i,\,b_i)$ and
$\Delta d:=d_{j'}-d_j\in[(1-q)d_{j'},\,d_{j'})$, both positive; the condition is
$|\Delta b-\Delta d|\le e^{-W}$. We count by choosing $i$, then $j'$, then $j$,
then $i'$.

\emph{Bulk $i$} (those with $b_i\ge\frac{2}{1-q}e^{-W}$, so that
$e^{-W}\le\frac{1-q}{2}b_i$). From $|\Delta b-\Delta d|\le e^{-W}$ and the ranges
of $\Delta b,\Delta d$,
\[
d_{j'}>\Delta d\ge\Delta b-e^{-W}\ge(1-q)b_i-\tfrac{1-q}{2}b_i=\tfrac{1-q}{2}b_i,
\qquad
(1-q)d_{j'}\le\Delta d\le\Delta b+e^{-W}<2b_i ,
\]
so $d_{j'}\in\big[\tfrac{1-q}{2}b_i,\ \tfrac{2}{1-q}b_i\big]$, a multiplicative
window of width $\frac{4}{(1-q)^2}$: at most $N_0=O_q(1)$ choices of $j'$ once $i$
is fixed. Then $j$ is free ($\le J(W)$ choices). Finally, with $i,j',j$ fixed, the
condition $|\Delta b-\Delta d|\le e^{-W}$ forces
\[
b_{i'}\in\big[\,b_i-(d_{j'}-d_j)-e^{-W},\ b_i-(d_{j'}-d_j)+e^{-W}\,\big].
\]
This interval has length $2e^{-W}$. Since $i'\le I(W)$, every admissible term
satisfies $b_{i'}\ge e^{-W}$. By \eqref{eq:gapfact}, any two distinct terms of the
lacunary sequence lying above $e^{-W}$ are separated by at least $(1-q)e^{-W}$.
Consequently this interval contains at most
\[
\frac{2}{1-q}+1
\]
possible values of $b_{i'}$. Hence, for each fixed choice of $i,j',j$, there are
only $O_q(1)$ admissible choices of $i'$. It follows that the bulk contribution is
at most
\[
I(W)\cdot N_0\cdot J(W)\cdot O_q(1)=O_q\big(I(W)J(W)\big).
\]

\emph{Boundary $i$} (those with $b_i<\frac{2}{1-q}e^{-W}$). There are $O_q(1)$
such $i$ by the window bound. Fix one. Then $\Delta b\le b_i=O_q(e^{-W})$, so
$\Delta d\le\Delta b+e^{-W}=O_q(e^{-W})$; since $\Delta d\ge(1-q)d_{j'}$ this
forces $d_{j'}=O_q(e^{-W})$, hence $j'$ ranges over $O_q(1)$ near-floor indices.
Now $j$ is free ($\le J(W)$). Once $i,j',j$ are fixed, the same interval argument
as in the bulk case shows that $b_{i'}$ lies in an interval of length $2e^{-W}$;
since every admissible $b_{i'}$ is $\ge e^{-W}$, \eqref{eq:gapfact} again gives at
most $O_q(1)$ admissible choices of $i'$. The boundary thus contributes
$O_q(1)\cdot O_q(1)\cdot J(W)\cdot O_q(1)=O_q(J(W))$.

Summing (A)--(E):
\[
E(W)\ \le\ I(W)J(W)+O_q\big(I(W)+J(W)\big)+0+O_q\big(I(W)J(W)\big)
=O_q\big(I(W)J(W)\big),
\]
which is the claim.
\end{proof}

\subsection{The grid principle}\label{sec:gridprinciple}

We now combine the energy estimate with the block principle. The following
proposition is the engine behind both main theorems; the parameter
$\sigma\in\{+1,-1\}$ allows sum and difference grids to be treated
simultaneously.

\begin{proposition}[Grid principle]\label{prop:grid}
Let $(u_i)_{i\ge1}$ and $(v_k)_{k\ge1}$ be lacunary sequences in $(0,\infty)$
with ratios $\le q<\tfrac12$ and counting functions $I$, $J$ satisfying
\eqref{eq:star}. Then for each $\sigma\in\{+1,-1\}$ the grid
\[
G_\sigma:=\{\,u_i+\sigma v_k:\ i,k\ge1\,\}
\]
is not measure universal.
\end{proposition}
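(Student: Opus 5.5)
We will apply Proposition~\ref{prop:block} to the bounded infinite set $G_\sigma$. Note that $G_\sigma\subset[-v_1,u_1+v_1]$ is bounded, and it is infinite because $(u_i)$ is strictly decreasing. Fix a scale $W$ and set $I=I(W)$, $J=J(W)$. Consider the finite multiset of cross-sums
\[
P_W:=\{x_{i,k}:=u_i+\sigma v_k:\ i\le I,\ k\le J\},
\]
which has $IJ$ index pairs. The aim is to extract from $P_W$ a subset $\Phi_W$ of size $\gtrsim_q IJ$ whose points are pairwise at distance $>e^{-W}$. The blocks will then have $n\asymp IJ$ and gap at least $e^{-\psi}$ with $\psi=W$. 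Choosing $W_k\to\infty$ along which $I(W_k)J(W_k)/W_k\to\infty$, which is possible by \eqref{eq:star}, gives $\psi_k=W_k=o(n_k)$ with $n_k\to\infty$. Proposition~\ref{prop:block} then concludes.

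\textbf{Step 1: bounded clustering via energy.} For each pair $(i,k)$, let $r(i,k)$ be the number of pairs $(i',k')$ in the index box with $|x_{i,k}-x_{i',k'}|\le e^{-W}$. Then
\[
\sum_{(i,k)} r(i,k)=\#\{(i,i',k,k'):\ |(u_i-u_{i'})+\sigma(v_k-v_{k'})|\le e^{-W}\},
\]
which is exactly $E(W)$ for $\sigma=+1$ and $E^-(W)$ for $\sigma=-1$. Lemma~\ref{lem:energy} applies, since the ratios are at most $q<\tfrac12$, and gives $\sum r(i,k)\le C(q)\,IJ$. The average of $r$ is therefore at most $C$. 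By Markov's inequality, at least $IJ/2$ index pairs satisfy $r(i,k)\le 2C$.

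\textbf{Step 2: greedy separation.} Define a graph on these good pairs, joining $(i,k)$ and $(i',k')$ when $|x_{i,k}-x_{i',k'}|\le e^{-W}$. This relation includes coincidences of values. Every good vertex has degree at most $2C$, so a greedy choice yields an independent set of size at least $(IJ/2)/(2C+1)$. Distinct vertices of this independent set give distinct points of $G_\sigma$, since coinciding values would be adjacent. These points are pairwise at distance $>e^{-W}$. Call this set $\Phi_W$, so that $|\Phi_W|\ge c_q\,IJ=:n$.

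\textbf{Step 3: assembly.} Take $n_k=\lceil c_q I(W_k)J(W_k)\rceil$ and $\psi_k=W_k$. Then $\psi_k/n_k\le W_k/(c_q I J)\to0$ by \eqref{eq:star}, and $n_k\to\infty$. Proposition~\ref{prop:block} therefore shows that $G_\sigma$ is not measure universal.

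The substantive difficulty, bounding the clustering of cross-sums uniformly with no scale separation between the two sequences, is already contained in Lemma~\ref{lem:energy}. In this argument the main point to check is that the energy sums match exactly the neighbor counts $r(i,k)$ for both signs. Coincident values must be counted inside $r$ so that the selected points are genuinely distinct elements of $G_\sigma$. Apart from these checks, the proof is the routine Markov-plus-greedy extraction described above.
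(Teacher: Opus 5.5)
Your proof is correct, and its overall structure matches the paper's. Lemma~\ref{lem:energy} bounds the near-coincidences among the $I(W)J(W)$ cross-sums. From this you extract $\gtrsim_q I(W)J(W)$ points of $G_\sigma$ with gaps at least $e^{-W}$, and Proposition~\ref{prop:block} is applied with $\psi_k=W_k$ along scales realizing \eqref{eq:star}.

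The routes differ only in the extraction step. The paper partitions the ambient interval into cells of length $e^{-W}$ and notes that same-cell pairs are counted by $E(W)$ (resp.\ $E^-(W)$), so $\sum_c n_c^2\le C\,IJ$. Cauchy--Schwarz then gives at least $IJ/C$ occupied cells, and the paper keeps one point from every second occupied cell. Multiplicities are absorbed automatically, since each cell contributes a single point.

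You instead read the energy as the sum of neighbour counts $r(i,k)$ and use Markov to find $IJ/2$ vertices of degree at most $2C$. You then take a greedy independent set, putting coincident values into the adjacency relation to force distinctness. Your identification of $\sum r(i,k)$ with $E(W)$ and $E^-(W)$ is exact, and the degree, greedy and assembly steps are all sound.

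As for what each buys: the paper's every-other-cell selection relies on the linear order of $\R$. Your graph argument is order-free and would work verbatim in any metric setting. The Markov step costs a constant, ending at $IJ/(2(2C+1))$ rather than the paper's $IJ/(2C)$. Replacing Markov plus greedy by the Caro--Wei bound $\sum_v 1/r(v)\ge (IJ)^2/E(W)\ge IJ/C$ would give $IJ/C$ directly. That bound is exactly the graph-theoretic counterpart of the paper's Cauchy--Schwarz step. Since only $n_k\gtrsim I(W_k)J(W_k)$ matters, these constants are immaterial.
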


\begin{proof}
All points of $G_\sigma$ lie in the bounded interval
$Q:=(-v_1,\ u_1+v_1]$. Fix $W>0$ and consider the multiset of grid points at
scale $\ge e^{-W}$,
\[
P_\sigma(W):=\{\,u_i+\sigma v_k:\ i\le I(W),\ k\le J(W)\,\}\subset Q,
\qquad |P_\sigma(W)|=I(W)J(W),
\]
counted with multiplicity. Partition $Q$ into half-open intervals (``cells'') of
length $e^{-W}$, and let $D(W)$ be the number of cells meeting $P_\sigma(W)$. If
$n_c$ denotes the number of elements of $P_\sigma(W)$ in cell $c$, then
$\sum_c n_c=I(W)J(W)$ and, since two elements of the same cell differ by less
than $e^{-W}$,
\[
\sum_c n_c^2
=\#\big\{(p,p')\in P_\sigma(W)^2:\ p,p'\ \text{in the same cell}\big\}
\ \le\
\begin{cases}
E(W), & \sigma=+1,\\
E^{-}(W), & \sigma=-1,
\end{cases}
\]
because the difference of the pair $(p,p')=\big((i,k),(i',k')\big)$ is
$(u_i-u_{i'})+\sigma(v_k-v_{k'})$. In both cases the right-hand side is at most
$C\,I(W)J(W)$, by Lemma~\ref{lem:energy}. By Cauchy--Schwarz,
\begin{equation}\label{eq:cells}
D(W)\ \ge\ \frac{\big(\sum_c n_c\big)^2}{\sum_c n_c^2}\ \ge\
\frac{\big(I(W)J(W)\big)^2}{C\,I(W)J(W)}\ =\ \frac{I(W)J(W)}{C}.
\end{equation}
Now select one element of $P_\sigma(W)$ from every second occupied cell (i.e.\
from the 1st, 3rd, 5th, \dots occupied cell, ordered left to right). The indices of any two consecutive selected cells differ by at least two, so
points chosen from them are separated by at least one full cell-width. Since each
cell has length $e^{-W}$, the selected points form a set
\[
\Phi_W\subset G_\sigma\cap Q,\qquad
|\Phi_W|\ \ge\ \tfrac12 D(W)\ \ge\ \frac{I(W)J(W)}{2C},\qquad
\min_{x\neq y\in\Phi_W}|x-y|\ \ge\ e^{-W}.
\]
We verify the hypothesis of Proposition~\ref{prop:block}. By \eqref{eq:star}
there are scales $1\le W_1<W_2<\cdots\to\infty$ with
\[
I(W_k)\,J(W_k)\ \ge\ k\,W_k\qquad\text{for all }k\ge1 .
\]
Set $n_k:=\big\lfloor I(W_k)J(W_k)/(2C)\big\rfloor$ and consider the blocks
$\Phi_{W_k}$. Then $n_k\ge kW_k/(2C)-1\to\infty$, $|\Phi_{W_k}|\ge n_k$, and the
gaps in $\Phi_{W_k}$ are at least $e^{-W_k}$. Moreover, for $k\ge 4C$ we have
$I(W_k)J(W_k)\ge kW_k\ge 4C$, hence $n_k\ge I(W_k)J(W_k)/(4C)$ and
\[
\frac{W_k}{n_k}\ \le\ \frac{4C\,W_k}{I(W_k)\,J(W_k)}\ \le\ \frac{4C}{k}\
\longrightarrow\ 0 ,
\]
so $\psi_k:=W_k=o(n_k)$. Proposition~\ref{prop:block} now implies that
$G_\sigma$ is not measure universal.
\end{proof}

\subsection{Proofs of the main theorems}\label{sec:mainproofs}

\begin{proof}[Proof of Theorem~\ref{thm:product}]
By the reduction of Section~\ref{sec:counting} (Lemma~\ref{lem:thinning}) we may
replace $(b_i)$ and $(d_j)$ by thinned subsequences whose ratios are
$\le q<\tfrac12$; condition \eqref{eq:star} is preserved. By
Proposition~\ref{prop:grid},
\[
G_+=\{b_i+d_j\}\quad\text{and}\quad G_-=\{b_i-d_j\}
\]
are not measure universal. Since $G_+\subseteq S_1+S_2$ and
$G_-\subseteq S_1-S_2$, Lemma~\ref{lem:mono} concludes the proof.
\end{proof}

\begin{proof}[Proof of Theorem~\ref{thm:arbitrary}]
Since $S-A=S+(-A)$ and $-A$ is infinite, it suffices to treat $S+A$.

By hypothesis there are $C>0$ and $i_0$ with $-\log b_i\le Ci$ for $i\ge i_0$,
and $(b_i)$ is lacunary with some ratio $q_1\in(0,1)$. Fix $\ell$ with
$q_1^{\ell}<\tfrac12$ and pass to the thinned subsequence $(b_{\ell i})$, which
is lacunary with ratio $q:=q_1^{\ell}<\tfrac12$ and satisfies
$-\log b_{\ell i}\le C\ell i$ for $\ell i \ge i_0$; by
Lemma~\ref{lem:counting}(ii), its counting function --- which we again denote by
$I$ --- satisfies
\begin{equation}\label{eq:Ilinear}
I(W)\ \ge\ cW\qquad\text{for all sufficiently large }W,
\end{equation}
with $c:=1/(2C\ell)$. Since $\{b_{\ell i}\}+A\subseteq S+A$, by
Lemma~\ref{lem:mono} it suffices to show that $\{b_{\ell i}\}+A$ is not measure
universal.

\emph{Case 1: $A$ is unbounded.} Then $\{b_{\ell i}\}+A$ is unbounded,
and no unbounded set is measure universal.

\emph{Case 2: $A$ is bounded.} Being infinite and bounded, $A$ has a finite
accumulation point $t$. At least one of the sets
$A\cap(t,\infty)$, $A\cap(-\infty,t)$ has $t$ as an accumulation point; hence
there exist $\sigma\in\{+1,-1\}$ and a strictly decreasing sequence
$\varepsilon_1>\varepsilon_2>\cdots>0$ with $\varepsilon_j\to0$ and
\[
t+\sigma\varepsilon_j\in A\qquad\text{for all }j .
\]
Extract a lacunary subsequence greedily: let $k_1:=1$ and, given $k_m$, let
$k_{m+1}$ be the least index $k$ with
$\varepsilon_k\le q\,\varepsilon_{k_m}$ (it exists since $\varepsilon_j\to0$).
Then $(v_m):=(\varepsilon_{k_m})$ is lacunary with ratio $\le q<\tfrac12$, and
since it is an infinite null sequence of positive numbers, its counting function
satisfies
\begin{equation}\label{eq:Jinfty}
J(W)\ \ge\ m\qquad\text{whenever}\quad W\ \ge\ -\log v_m,
\qquad\text{so}\qquad J(W)\to\infty.
\end{equation}
Consider the grid $G_\sigma:=\{\,b_{\ell i}+\sigma v_m:\ i,m\ge1\,\}$. Since
$b_{\ell i}+\sigma v_m=b_{\ell i}+(t+\sigma\varepsilon_{k_m})-t$ and
$t+\sigma\varepsilon_{k_m}\in A$, we have
\[
G_\sigma\ \subseteq\ \big(\{b_{\ell i}\}+A\big)-t .
\]
By \eqref{eq:Ilinear} and \eqref{eq:Jinfty},
\[
\frac{I(W)J(W)}{W}\ \ge\ c\,J(W)\ \longrightarrow\ \infty ,
\]
so condition \eqref{eq:star} holds for the pair $(b_{\ell i})$, $(v_m)$, and
Proposition~\ref{prop:grid} shows that $G_\sigma$ is not measure universal. By
Lemma~\ref{lem:mono}, $(\{b_{\ell i}\}+A)-t$ is not measure universal, and by
Lemma~\ref{lem:affine} neither is $\{b_{\ell i}\}+A$.

Finally, for $S=\{2^{-n}\}$ take $b_i=2^{-i}$, so that
$-\log b_i=i\log2=O(i)$.
\end{proof}

\subsection{Proofs of the corollaries}\label{sec:corollaries}

\begin{proof}[Proof of Corollary~\ref{cor:tradeoff}]
(i) By Lemma~\ref{lem:counting}(iii), $I(W)/W^{1/\alpha}\to\infty$ and
$J(W)/W^{1/\beta}\to\infty$. Hence
\[
\frac{I(W)J(W)}{W}
=\frac{I(W)}{W^{1/\alpha}}\cdot\frac{J(W)}{W^{1/\beta}}\cdot
W^{\frac1\alpha+\frac1\beta-1}\ \longrightarrow\ \infty ,
\]
since the first two factors tend to infinity and the exponent of $W$ is
nonnegative. This is \eqref{eq:star}, and Theorem~\ref{thm:product} applies.

(ii) By Lemma~\ref{lem:counting}(ii), $I(W)\ge c\,W^{1/\alpha}$ and
$J(W)\ge c'\,W^{1/\beta}$ for all large $W$, so
$I(W)J(W)/W\ge cc'\,W^{\frac1\alpha+\frac1\beta-1}\to\infty$, the exponent now
being strictly positive. Theorem~\ref{thm:product} applies again.
\end{proof}

\begin{proof}[Proof of Corollary~\ref{cor:symmetric}]
The first statement is the case $\alpha=\beta=2$ of
Corollary~\ref{cor:tradeoff}(i). For the specializations, apply it with
$S_1=S_2=S$ and $(b_i)=(d_j)$ the given subsequence. For a geometric sequence
$S=\{ar^n\}$ ($a\neq0$, $0<|r|<1$): by Lemma~\ref{lem:affine} (with $\rho=-1$,
$\tau=0$) we may assume $a>0$; take $b_k=ar^k$ if $r\in(0,1)$, or the even powers
$b_k=ar^{2k}$ if $r\in(-1,0)$; in either case $(b_k)\subset S$ is lacunary with
$-\log b_k=O(k)=o(k^2)$. For $\{2^{-n^{\alpha}}\}$ with $1\le\alpha<2$, take
$b_k=2^{-k^{\alpha}}$, which is lacunary and satisfies
$-\log b_k=k^{\alpha}\log2=o(k^2)$. If $0<\alpha<1$, then
$2^{-(n+1)^{\alpha}}/2^{-n^{\alpha}}\to1$, so
Lemma~\ref{lem:extraction} supplies a lacunary subsequence with
$-\log b_k=O(k)=o(k^2)$. This proves \eqref{eq:stretched}.
\end{proof}

For Corollary~\ref{cor:concrete}(b) we use the standard extraction of a usable
subsequence.

\begin{lemma}\label{lem:extraction}
Let $(a_n)$ be decreasing with $a_n\to0$ and $a_{n+1}/a_n\ge\eta>0$ for all
large $n$. Then for every $q\in(0,1/2)$ there is a subsequence
$(b_k)=(a_{n_k})$ with $\eta q\,b_k<b_{k+1}\le q\,b_k$ for all $k$. In
particular $(\eta q)^{k-1}b_1<b_k\le q^{k-1}b_1$, so $-\log b_k=O(k)$.
\end{lemma}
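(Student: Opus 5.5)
The plan is a greedy extraction. Fix $q\in(0,1/2)$ and choose $N$ so that $a_{n+1}/a_n\ge\eta$ for all $n\ge N$. Set $n_1:=N$. Given $n_k$, let $n_{k+1}$ be the least index $n>n_k$ with $a_n\le q\,a_{n_k}$. Such an index exists because $a_n\to0$ and $a_{n_k}>0$. Positivity of the terms is implicit in the statement, since the ratios $a_{n+1}/a_n$ must make sense and be $\ge\eta>0$; in any case all large terms are positive. Put $b_k:=a_{n_k}$. The upper bound $b_{k+1}\le q\,b_k$ then holds by construction.

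For the lower bound, use minimality of $n_{k+1}$. Since $q<1$, we have $a_{n_k}>q\,a_{n_k}$, so $n_k$ itself fails the condition and $n_{k+1}\ge n_k+1$. Hence the predecessor index $n_{k+1}-1\ge n_k$ does not satisfy $a_{n}\le q\,a_{n_k}$. If $n_{k+1}-1>n_k$ this holds by minimality, and if $n_{k+1}-1=n_k$ it is the observation just made. So in both cases $a_{n_{k+1}-1}>q\,a_{n_k}$. Since $n_{k+1}-1\ge N$, the ratio hypothesis gives
\[
b_{k+1}=a_{n_{k+1}}\ \ge\ \eta\,a_{n_{k+1}-1}\ >\ \eta q\,b_k .
\]
This is the two-sided bound $\eta q\,b_k<b_{k+1}\le q\,b_k$.

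Iterating the two inequalities from $k=1$ gives $(\eta q)^{k-1}b_1<b_k\le q^{k-1}b_1$. Taking logarithms yields $-\log b_k<(k-1)\log\frac{1}{\eta q}-\log b_1=O(k)$, and also $-\log b_k\ge(k-1)\log(1/q)-\log b_1$. In particular $(b_k)$ is lacunary with ratio $q<1/2$ and satisfies $-\log b_k=O(k)$.

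The only delicate point is the lower bound. It needs the index just before $n_{k+1}$ to lie in the range where the ratio bound applies, which is why the construction starts at $n_1=N$. It also needs that predecessor to fail the stopping condition, which holds even when it equals $n_k$ because $q<1$. The rest is routine.
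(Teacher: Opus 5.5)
Your proof is correct and is the same greedy extraction the paper uses: start past the range where the ratio bound fails, take $n_{k+1}$ minimal with $a_n\le q\,a_{n_k}$, and combine minimality with $a_{n_{k+1}}\ge\eta\,a_{n_{k+1}-1}$ to get the lower bound. Your explicit handling of the case $n_{k+1}-1=n_k$ adds a little care the paper leaves implicit. One nit, shared with the statement itself: the strict inequality $(\eta q)^{k-1}b_1<b_k$ is an equality at $k=1$, so it should be read for $k\ge 2$.
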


\begin{proof}
Choose $n_1$ beyond the range where the ratio bound fails. Having chosen $n_k$,
let $n_{k+1}$ be the least index with $a_{n_{k+1}}\le q\,a_{n_k}$ (it exists
since $a_n\to0$), and set $b_k=a_{n_k}$. Then $b_{k+1}\le q\,b_k$, while by
minimality $a_{n_{k+1}-1}>q\,b_k$, so
$b_{k+1}=a_{n_{k+1}}\ge\eta\,a_{n_{k+1}-1}>\eta q\,b_k$.
\end{proof}

\begin{proof}[Proof of Corollary~\ref{cor:concrete}]
(a) Suppose first $a>0$. If $r\in(0,1)$, then $(b_i)=(ar^i)$ is lacunary with
ratio $r$ and $-\log b_i=i\log(1/r)-\log a=O(i)$; if $r\in(-1,0)$, the even
powers $(ar^{2i})$ are lacunary with ratio $r^2$ and the same decay control. In
either case Theorem~\ref{thm:arbitrary} applies and gives the claim for
$\{ar^n\}\pm A$. If $a<0$, apply the case just treated to
$\{(-a)r^n\}\pm(-A)=-\big(\{ar^n\}\pm A\big)$ and use
Lemma~\ref{lem:affine} with $\rho=-1$.

(b) The hypothesis provides $\eta>0$ with $a_{n+1}/a_n\ge\eta$ eventually.
Lemma~\ref{lem:extraction} (with any $q<\tfrac12$) yields a lacunary subsequence
$(b_k)\subset\{a_n\}$ with $-\log b_k=O(k)$, and Theorem~\ref{thm:arbitrary}
applies. Taking $A=\{a_n\}$ recovers the double sumset.
\end{proof}

\begin{proof}[Proof of Corollary~\ref{cor:stretched}]
If $\alpha_1\le1$, then
\[
\liminf_{n\to\infty}
\frac{2^{-(n+1)^{\alpha_1}}}{2^{-n^{\alpha_1}}}>0,
\]
so Corollary~\ref{cor:concrete}(b) applies with
$A=\{2^{-n^{\alpha_2}}\}$, or with any infinite set. The case
$\alpha_2\le1$ is symmetric. If $\alpha_1,\alpha_2>1$ with
$1/\alpha_1+1/\alpha_2>1$, choose $\delta>0$ small enough that
\[
\frac{1}{\alpha_1+\delta}+\frac{1}{\alpha_2+\delta}\ \ge\ 1 ,
\]
which is possible by continuity. Since
$n^{\alpha_m}\log2=o\big(n^{\alpha_m+\delta}\big)$ for $m=1,2$,
Corollary~\ref{cor:tradeoff}(i) applies with $\alpha=\alpha_1+\delta$,
$\beta=\alpha_2+\delta$.
\end{proof}

\subsection{A packing variant: lacunarity on one side only}\label{sec:packing}

In Lemma~\ref{lem:energy}, the lacunarity of the first sequence is used in two
ways: to exclude the same-sign quadruples (case (D)) and to localize the index
$i'$ (cases (C) and (E)). Both uses survive if the first sequence is replaced by
an \emph{arbitrary} well-separated finite set, provided one localizes $d_{j'}$
around the difference $\Delta u$ rather than around $u_i$. This observation
yields Theorem~\ref{thm:packing}. Note that the strict bound $q<\tfrac12$ is not
needed here: the same-sign case is now excluded by the separation of the first
factor alone.

\begin{lemma}\label{lem:packenergy}
Let $W>0$, let $U=\{u_1,\dots,u_N\}\subset\R$ be a finite set with
$|u_a-u_{a'}|\ge 2e^{-W}$ for all $a\neq a'$, and let $(d_j)$ be lacunary with
ratio $q\in(0,1)$ and counting function $J$; write $J=J(W)$. For
$\sigma\in\{+1,-1\}$ set
\[
E_\sigma(W):=\#\Big\{(a,a',j,j')\in\{1,\dots,N\}^2\times\{1,\dots,J\}^2:\
\big|(u_a-u_{a'})+\sigma(d_j-d_{j'})\big|\le e^{-W}\Big\}.
\]
Then there is $C=C(q)$ with
\[
E_\sigma(W)\ \le\ NJ+C\,N^2+C\,N .
\]
In particular, if $N\le J$ then $E_\sigma(W)\le (1+2C)\,NJ$.
\end{lemma}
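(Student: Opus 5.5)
The plan is to split the quadruples $(a,a',j,j')$ according to whether $a=a'$ and whether $j=j'$, exactly as in the proof of Lemma~\ref{lem:energy}. The difference is that for $a\neq a'$ we localize $d_{j'}$ around the fixed difference $\Delta u:=u_a-u_{a'}$ rather than around a single term. We only use \eqref{eq:gapfact} for the lacunary sequence $(d_j)$, and it holds for any ratio $q\in(0,1)$. It gives two facts. First, distinct terms $d>d'$ with $d\ge e^{-W}$ satisfy $d-d'\ge(1-q)d$. Second, an interval of length $2e^{-W}$, or a multiplicative window of bounded width, contains $O_q(1)$ terms that are $\ge e^{-W}$.

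\emph{Diagonal and degenerate cases.}
\begin{itemize}
\item If $a=a'$ and $j=j'$, the condition holds trivially, giving exactly $NJ$ quadruples.
\item If $a=a'$ and $j\neq j'$, the condition is $|d_j-d_{j'}|\le e^{-W}$. As in case (B) of Lemma~\ref{lem:energy}, the larger of the two terms $d$ satisfies $e^{-W}\le d\le e^{-W}/(1-q)$. So it has $O_q(1)$ choices, and then the partner lies in an interval of length $e^{-W}$ above $e^{-W}$, giving $O_q(1)$ choices. Multiplying by the $N$ choices of $a$ gives $C N$.
\item If $a\neq a'$ and $j=j'$, the condition reads $|u_a-u_{a'}|\le e^{-W}$. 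This contradicts the $2e^{-W}$-separation of $U$, so there are no such quadruples. This is the point where separation of $U$ replaces the $q<\tfrac12$ argument of case (D).
\end{itemize}

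\emph{Main case $a\neq a'$, $j\neq j'$.} This is where the real work is. Fix the ordered pair $(a,a')$; there are at most $N^2$ of them. Then $|\Delta u|\ge 2e^{-W}$, so $e^{-W}\le\tfrac12|\Delta u|$. Write $d>d'$ for the larger and smaller of $d_j,d_{j'}$. The condition $|\Delta u\pm(d_j-d_{j'})|\le e^{-W}$ forces
\[
\tfrac12|\Delta u|\le|\Delta u|-e^{-W}\le d-d'\le|\Delta u|+e^{-W}\le\tfrac32|\Delta u|.
\]
Combining this with $(1-q)d\le d-d'<d$ confines $d$ to the multiplicative window $\big[\tfrac12|\Delta u|,\tfrac{3}{2(1-q)}|\Delta u|\big]$. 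That window has bounded width $3/(1-q)$, so it contains $O_q(1)$ terms. Once $d$ is chosen, $d'=d-(d-d')$ lies in an interval of length $2e^{-W}$ determined by $\Delta u$ and $d$. Since $d'\ge e^{-W}$, there are $O_q(1)$ choices for $d'$. Which of $j,j'$ is the larger term contributes only a factor $2$. Altogether this case gives at most $C N^2$ quadruples, with no dependence on $J$. This is the main obstacle: one must check that the lower bound $|\Delta u|-e^{-W}\gtrsim|\Delta u|$ makes the window genuinely multiplicative, and this is precisely what the $2e^{-W}$ separation buys.

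Summing the three cases gives $E_\sigma(W)\le NJ+CN^2+CN$, uniformly in $\sigma\in\{+1,-1\}$. If $N\le J$, then $N^2\le NJ$ and $N\le NJ$, which yields $E_\sigma(W)\le(1+2C)NJ$.
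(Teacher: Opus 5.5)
Your proof is correct and follows essentially the paper's argument: the same diagonal/degenerate case split, with the $2e^{-W}$-separation giving $e^{-W}\le\tfrac12|\Delta u|$, so that the larger $d$-term lies in a bounded multiplicative window and the smaller one in an interval of length $2e^{-W}$. The only difference is cosmetic. You work with $\bigl||\Delta u|-(d-d')\bigr|\le e^{-W}$, which treats both values of $\sigma$ and both sign patterns at once; the same-sign quadruples, which the paper shows are empty, are simply absorbed into your $CN^2$ count, so you need neither the involution reducing to $\sigma=+1$ nor a separate case (D).
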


\begin{proof}
By the involution $(a,a',j,j')\mapsto(a,a',j',j)$, as in
Lemma~\ref{lem:energy}, it suffices to treat $\sigma=+1$. We again classify by
the signs of $u_a-u_{a'}$ and $d_j-d_{j'}$.

(A) $a=a'$, $j=j'$: exactly $NJ$ quadruples.

(B) $a=a'$, $j\neq j'$: the condition is $|d_j-d_{j'}|\le e^{-W}$, and exactly as
in case (B) of Lemma~\ref{lem:energy} (which used only the lacunarity of
$(d_j)$, valid for any $q\in(0,1)$) there are $O_q(1)$ admissible pairs
$(j,j')$; total $O_q(N)$.

(C) $a\neq a'$, $j=j'$: the condition is $|u_a-u_{a'}|\le e^{-W}$, impossible by
the $2e^{-W}$-separation of $U$.

(D) $a\neq a'$, $j\neq j'$, same signs: then
$|(u_a-u_{a'})+(d_j-d_{j'})|=|u_a-u_{a'}|+|d_j-d_{j'}|\ge 2e^{-W}>e^{-W}$, so
there are no such quadruples.

(E) $a\neq a'$, $j\neq j'$, opposite signs: relabelling, we may assume
$\Delta u:=u_a-u_{a'}>0$ and $\Delta d:=d_{j'}-d_j>0$ with $j'<j$, the condition
being $|\Delta u-\Delta d|\le e^{-W}$; the mirror case contributes an equal
count. Choose the ordered pair $(a,a')$: at most $N^2$ ways, and $\Delta u\ge
2e^{-W}$ is then fixed. The condition forces
$\Delta d\in[\Delta u-e^{-W},\,\Delta u+e^{-W}]\subset[\tfrac12\Delta u,\,
\tfrac32\Delta u]$. Since $\Delta d=d_{j'}-d_j\in[(1-q)d_{j'},\,d_{j'})$, we get
\[
d_{j'}\in\Big(\Delta d,\ \frac{\Delta d}{1-q}\Big]\subset
\Big(\tfrac12\Delta u,\ \frac{3\Delta u}{2(1-q)}\Big],
\]
a multiplicative window of bounded width: at most $O_q(1)$ choices of $j'$.
Given $(a,a',j')$, the value $d_j=d_{j'}-\Delta d$ lies in an interval of length
$2e^{-W}$; since all admissible $d_j$ are $\ge e^{-W}$ and, by
\eqref{eq:gapfact}, pairwise $(1-q)e^{-W}$-separated, there are at most $O_q(1)$
choices of $j$. Total: $O_q(N^2)$.

Summing (A)--(E) gives the claim; if $N\le J$ then $N^2\le NJ$ and $N\le NJ$.
\end{proof}

\begin{proof}[Proof of Theorem~\ref{thm:packing}]
Since $S_1-S_2\supseteq\{u-d_j:\ u\in S_1\cap Q_0,\ j\ge1\}$ and similarly for
$S_1+S_2$, by Lemma~\ref{lem:mono} it suffices to prove that for each
$\sigma\in\{+1,-1\}$ the set
\[
G_\sigma:=\{\,u+\sigma d_j:\ u\in S_1\cap Q_0,\ j\ge1\,\}
\]
is not measure universal. All points of $G_\sigma$ lie in the bounded interval
$Q:=[\inf Q_0-d_1,\ \sup Q_0+d_1]$.

Fix $W>0$ and set $\widetilde N(W):=\min\big(N(W),J(W)\big)$. Choose a
$2e^{-W}$-separated set $U_W\subseteq S_1\cap Q_0$ with
$|U_W|=\widetilde N(W)$ --- the choice may depend on $W$; the blocks produced at
different scales need not be related --- and consider the multiset
\[
P_\sigma(W):=\{\,u+\sigma d_j:\ u\in U_W,\ j\le J(W)\,\}\subset Q,\qquad
|P_\sigma(W)|=\widetilde N(W)\,J(W).
\]
Cover $Q$ by the consecutive half-open cells
$[\inf Q+(m-1)e^{-W},\ \inf Q+me^{-W})$, $m\ge1$, so that every point of $Q$
lies in exactly one cell and all cells have length $e^{-W}$. Two elements of the
same cell differ by less than $e^{-W}$, so the number of same-cell ordered pairs
is at most $E_\sigma(W)\le(1+2C)\widetilde N(W)J(W)$ by
Lemma~\ref{lem:packenergy} (applicable since $\widetilde N(W)\le J(W)$). By
Cauchy--Schwarz, as in \eqref{eq:cells}, the number of occupied cells is at
least $\widetilde N(W)J(W)/(1+2C)$, and selecting one point from every second
occupied cell produces
\[
\Phi_W\subset G_\sigma\cap Q,\qquad
|\Phi_W|\ \ge\ \frac{\widetilde N(W)\,J(W)}{2(1+2C)},\qquad
\min_{x\neq y\in\Phi_W}|x-y|\ \ge\ e^{-W}.
\]
Finally, by \eqref{eq:packstar} there are scales $1\le W_1<W_2<\cdots\to\infty$
with $\widetilde N(W_k)J(W_k)\ge kW_k$, and exactly as at the end of the proof
of Proposition~\ref{prop:grid} the blocks $\Phi_{W_k}$ satisfy the hypothesis of
Proposition~\ref{prop:block}. Hence $G_\sigma$ is not measure universal.
\end{proof}

\begin{proof}[Proof of Theorem~\ref{thm:arbitrary} via Theorem~\ref{thm:packing}]
By Lemma~\ref{lem:counting}(ii), the lacunary sequence in $S$ has
$J(W)\ge cW$ for large $W$. If $A$ is unbounded, then $S+A$ and $S-A$ are
unbounded and we are done; otherwise $A$ has infinitely many points in some
bounded interval $Q_0$, and its packing function there satisfies
$N(W)\to\infty$. Then
\[
\frac{\min\big(N(W),J(W)\big)\,J(W)}{W}\ \ge\ c\,\min\big(N(W),J(W)\big)\
\longrightarrow\ \infty ,
\]
and Theorem~\ref{thm:packing} with $S_1=A$, $S_2=S$ shows that $A+S=S+A$ and
$A-S$ are not measure universal; since $S-A=-(A-S)$, Lemma~\ref{lem:affine}
(with $\rho=-1$) gives the claim for $S-A$ as well.
Note that no extraction of a one-sided monotone sequence in $A$ is needed: the
separated sets $U_W$ are taken directly inside $A\cap Q_0$.
\end{proof}

\begin{remark}[Why the cap at $J(W)$ is natural]\label{rem:capNJ}
The term $C N^2$ in Lemma~\ref{lem:packenergy} reflects the fact that an
arbitrary separated set $U$ may have many repeated differences aligned with
differences of $(d_j)$. For example, if $U$ contains a long arithmetic
progression whose step equals one of the differences $d_{j'}-d_j$, then a
single choice of $(j,j')$ already produces many quadruples in case (E).
Without an additional hypothesis controlling the additive structure of $U$, an
$N^2$ bound is therefore the natural uniform estimate.

This loss is harmless in the range where Theorem~\ref{thm:packing} carries new
information. If
\[
\limsup_{W\to\infty}\frac{N(W)}{W}=\infty,
\]
then $S_1$ alone is not measure universal: apply
Proposition~\ref{prop:block} directly to the separated sets $U_{W_k}$, with
$n_k=N(W_k)$ and $W_k=o(n_k)$. Thus the relevant regime is $N(W)=O(W)$. With
an energy bound of order $NJ+N^2$, the Cauchy--Schwarz step
yields a separated set of order
\[
\frac{(NJ)^2}{NJ+N^2}\asymp \min(N,J)J,
\]
which explains the cap in \eqref{eq:packstar}.
\end{remark}

\begin{remark}[Lacunarity on one factor only]\label{rem:clustered}
Theorem~\ref{thm:packing} is not subsumed by Theorem~\ref{thm:product}: the first
factor enters only through its packing function $N$, which may far exceed the
counting function of any lacunary subsequence it contains. Concretely, let
\[
S_1:=\bigcup_{k\ge1}\Big\{2^{-2^k}+m\cdot 4^{-2^k}:\ 0\le m\le 2^k\Big\},
\qquad
S_2:=\{2^{-j^{3/2}}\}_{j\ge1}.
\]
The $k$-th cluster of $S_1$ contains $2^k+1$ points spaced
$4^{-2^k}$ apart in an interval of length
$2^k4^{-2^k}\ll 2^{-2^k}$. At the scales
$W_k=(2^{k+1}+1)\log 2$, the spacing equals $2e^{-W_k}$, and hence
$N(W_k)\ge 2^k+1\gtrsim W_k$. On the other hand, any lacunary subsequence of
$S_1$ uses at most
$O(1)$ points per cluster: the diameter of the $k$-th cluster is
$o(2^{-2^k})$, so for any fixed lacunarity ratio it contains at most one selected
point once $k$ is large. Such a subsequence therefore has counting function
$O(\log W)$. With $S_2$
lacunary and $J(W)\asymp W^{2/3}$, condition \eqref{eq:star} fails along this pair
($I(W)J(W)/W\to0$), yet
\[
\frac{\min(N(W),J(W))\,J(W)}{W}\ \asymp\ \frac{W^{2/3}\cdot W^{2/3}}{W}
= W^{1/3}\ \longrightarrow\ \infty ,
\]
so Theorem~\ref{thm:packing} applies where Theorem~\ref{thm:product} does not.
The point is methodological: the near-energy estimate, in the packing form of
Lemma~\ref{lem:packenergy}, reaches a clustered first factor --- one with no
usable lacunary subsequence --- through metric mass alone, with no
relative-density input. The conclusion itself is \emph{not} new for this
particular pair: the clusters of $S_1$ are arithmetic progressions whose longest
gap is a vanishing fraction of the cluster. Thus
$\inf_{u<v}E(u,v)/(v-u)=0$, where $E(u,v)$ denotes the length of the longest
component of $(u,v)\setminus S_1$. The relative-density criterion of Humke and
Laczkovich~\cite{HL} (recorded in \cite[Theorem~2.25]{Svetic}) therefore already
shows that $S_1$ is not measure universal, and hence neither is $S_1\pm S_2$.
What is
new is that the additive-energy method of this paper reaches such pairs without
invoking any relative-density argument.
\end{remark}

\section{Further remarks}

\begin{remark}\label{rem:fullmeasure}
All results of this paper consume Theorem~\ref{thm:kol} as a black box: the
proofs construct the required blocks and use nothing else about the criterion.
Consequently, any strengthening of Theorem~\ref{thm:kol} transfers verbatim to
Theorems~\ref{thm:product}--\ref{thm:packing} and their corollaries. Two caveats
delimit what such strengthenings can say. First, for the sumsets of sequences
considered here --- which are \emph{countable} --- an avoiding set of \emph{full}
measure is impossible: as observed in \cite[\S1.3]{KolCantor}, if
$E\subseteq[0,1]$ has measure $1$ and $A$ is bounded and countable, then almost
every translate of a suitably contracted copy of $A$ lies in $E$. (This is in
contrast with uncountable sets, for which full-measure avoidance can hold
\cite[Theorem~1.3]{KolCantor}, \cite{GLW,SY}.) Natural quantitative substitutes are avoiding sets of measure arbitrarily
close to $1$, which the construction in \cite{Kol97} already provides, and
statements in which the exceptional set of dilations is null, as in
\cite[Theorem~2]{Kol97}. Second, the
``large sets'' of \cite{KP} --- sets of density $\ge 1-\epsilon$ in every unit
interval avoiding copies of a given \emph{unbounded} sequence --- concern the
Erd\H{o}s problem in the large, a different regime from the null sequences
studied here; see also \cite{GMY}.
\end{remark}

\section*{Acknowledgements}

A.~I. was supported in part by the National Science Foundation under NSF
DMS-2154232. A.~Y. was supported in part by the Natural Sciences and
Engineering Research Council of Canada, NSERC (GR030571 and GR030540).

\end{document}